\pgfplotsset{compat=1.18}
\tikzset{->-/.style={decoration={markings, 
  mark=at position #1 with {\arrow[line width=2pt]{>}}},postaction={decorate}}}
\setlist[description]{leftmargin=3mm, style=nextline, itemsep=1mm}
\numberwithin{equation}{section}
	\newtheorem{theorem}{Theorem}[section]
	\newtheorem{proposition}[theorem]{Proposition}
	\newtheorem*{theorem*}{Theorem}
	\newtheorem*{proposition*}{Proposition}
	\newtheorem*{lemma*}{Lemma}
	\newtheorem*{corollary*}{Corollary}
	\newtheorem*{conjecture*}{Conjecture}
	\newtheorem*{statement*}{Statement}
	\newtheorem*{claim*}{Claim}
\theoremstyle{definition}
	\newtheorem{example}[theorem]{Example}
	\newtheorem*{definition*}{Definition}
\theoremstyle{remark}
\renewenvironment{proof}[1][\proofname]{\par
  \pushQED{\qed}%
  \normalfont \topsep6\p@\@plus6\p@\relax
  \trivlist
  \item[\hskip\labelsep
        \scshape
    #1\@addpunct{.}]\ignorespaces
}{%
  \popQED\endtrivlist\@endpefalse
}
\colorlet{luigi}{green!50!gray}
\title{Variational models of robust optimal transport}
\author{Luigi De Masi \and Andrea Marchese \and Annalisa Massaccesi}
\keywords{Optimal transport, optimal networks, currents, traffic plans.}
\subjclass[2010]{49Q10, 49Q20, 49Q22, 90B06}
\begin{document}

\begin{abstract}
This paper introduces two variational formulations for a model of robust optimal transport, that is, the problem of designing optimal transport networks that are resilient to potential damages, balancing construction costs against the benefit of maintaining partial functionality when parts of the network are damaged. We propose a Eulerian formulation, where the network is modeled by a rectifiable measure and recovery plans are represented by 1-dimensional normal currents. This framework allows for changes in the direction of the transportation in response to damages but restricts damages to be characteristic functions of closed sets. We also propose a Lagrangian formulation, where the network is a traffic plan (that is, a measure on the space of Lipschitz curves) and recovery plans are sub-traffic plans. This approach prescribes the network's orientation but allows for a wider class of damages. 
    
    We prove existence of minimizers in both settings. The two models are compared through examples that illustrate their main differences: the Eulerian formulation's necessity for an unoriented network to achieve existence, the Lagrangian formulation's ability to handle general damages and its requirement for a positive distance between the supports of the source and target measures.
\end{abstract}

\maketitle

\section{Introduction and main results}\label{sec:intro}

Branched optimal transport aims at describing networks that span a prescribed distribution of sources and targets and optimize an energy that models the cost of construction of the network. Usually, this energy depends in a subadditive way on the intensity of the flow per unit length. Actually, there are several other features that could be taken into account, especially if we have real models of branching systems as a reference. Among these features, we want to focus here on the \emph{robustness} of a network, that is, its capacity of enduring a major, abrupt damage without a total collapse of the transport.
  
As observed in nature, especially in leaves, a natural system is often willing to pay something to be endowed with a ``plan B''. In mathematical terms this can be translated in a need of balance between optimization of the resources and redundancy. This suggests that even one of the most fundamental characteristics of an optimal network, that is, the absence of cycles, cannot be expected in the case of robust transport. In fact, the redundancy of the network earns a reward, modeled by a \emph{pay-off}, because it allows the good functioning of (at least a part of) the transport in the event of damage.

In this paper, we study robust optimal transport, whose data are as follows.

\begin{itemize}
\item An ambient domain $X$, which is a compact subset of $\mathbb{R}^n$.
\item A boundary datum $\nu$, which is a signed Radon measure in $X$, whose Jordan decomposition in the pair $\nu^-,\nu^+$ gives us the source and the target of the transport, respectively. Notice that it is not assumed at this stage $\nu^-(X)=\nu^+(X)$.
\item A sub-additive, increasing, and lower semi-continuous cost function $\phi \colon [0,+\infty)\to[0,+\infty)$ such that $\phi(0)=0$ and $\phi'(0)=+\infty$. $\phi(\theta)$ represents the construction cost for an edge of length $1$ and capacity $\theta$. In several major works on the subject of branched optimal transport, one sees $\phi=|\cdot|^\alpha$, with $\alpha\in(0,1)$.
\item A sequence $\{f_i\}_{i \in \mathbb{N}}$ of upper semicontinuous functions on $X$ with values in $[0,1]$, which describe the possible damages. The value $f_i(x) \in [0,1]$ is the relative efficiency of any ``route" that passes through $x$: if $f_i(x)=1$, then the network has normal functionality at $x$, while if $f_i(x)=0$, then the network is completely interrupted in that point.
\item A sequence of positive real numbers $a_i$ such that $\sum_{i \in \mathbb{N}} a_i=1$; each $a_i$ is the probability that damage $f_i$ occurs.
\item A \emph{pay-off} function $h \colon \mathbb{N} \times X \to [0,+\infty)$, bounded and continuous with respect to the second variable; the value $h(i,x)$ is the reward per unit mass starting or arriving at $x$, transported through the network in the event of damage $f_i$.
\end{itemize}
We propose two versions of the problem, a Eulerian and a Lagrangian one. These have different properties and we refer to Section \ref{sec:comparison} for a detailed comparison of the two models.\\

Both formulations pose significant conceptual and technical challenges, which, in our opinion, explain why a variational approach to this problem had not been previously addressed in the literature. Indeed, the main difficulty lies not only in proving existence results, but rather in identifying a formulation that is both mathematically sound and meaningful from the practical point of view.

In the Eulerian framework, a natural choice would be to model the network by a rectifiable current and the recovery plans by subcurrents. However, this approach leads to severe compactness issues: in the limit, cancellation phenomena may occur between recovery plans with opposite orientations, so that the weak limit of a sequence of admissible recovery plans need not correspond to a recovery plan for the limit network. To overcome this, we were led to represent the network by an unoriented rectifiable measure and to work with recovery plans that are currents dominated by it, thus breaking the orientation symmetry but ensuring closure under weak convergence.

The Lagrangian formulation, on the other hand, avoids these cancellation problems and allows for a much larger class of damages, but it introduces its own major difficulty: keeping uniform control of the total mass of the measures on paths. Indeed, each recovery plan may potentially add a non-negligible amount of curves, and a naive definition of admissible competitors would not provide any a priori bound on the total transported mass. Establishing such bounds required a careful reformulation of the problem and the introduction of suitable compactness arguments.

These obstacles highlight that the main novelty of the present work does not lie only in the technical aspects of the proofs, but rather in the identification of the correct variational setting in which the problem becomes well-posed.

\subsection{Eulerian formulation}
In this framework the damages are characteristic functions of closed sets, and a competitor for the transport problem is made by a pair $\left(\mu,\{T_i\}_{i\in\mathbb{N}}\right)$; more precisely:
\begin{itemize}
\item Each $f_i$ is the characteristic function of a closed subset of $X$; this means that, in this model, each damage $f_i$ completely shuts-off the network in a relatively open subset $S_i$ of $X$.
\item $\mu$ is a $1$-dimensional rectifiable measure $\mu=\theta \mathcal{H}^1 \llcorner E$, where $E$ is a $1$-rectifiable set, which represents the network to be built. It can be thought as an \emph{unoriented} graph with multiplicity $\theta(x)$, which is the maximum amount of mass that can be transported through $x$.
\item Due to the threat of possible damages, the network $\mu$ is equipped with a sequence of ``backup" or \emph{recovery plans} $\{T_i\}_{i \in \mathbb{N}}$. Each $T_i$ is a 1-dimensional rectifiable current ``contained" in $\mu$, which is bound to avoid the set $S_i$ and moves a portion of $\nu^-$ onto a portion of $\nu^+$. Each $T_i$ can be thought as an \emph{oriented} graph with multiplicity and  represents a possible partial transport of $\nu^-$ into $\nu^+$ through $\mu$ in case of the damage $f_i$.
\end{itemize}
The energy of a competitor $\left(\mu,\{T_i\}_{i\in\mathbb{N}}\right)$ is given by
\begin{equation}\label{intro_en}
\mathbb{E}^\phi_h(\mu,\{T_i\})
\coloneqq
\mathbb{M}^\phi(\mu)-\sum_{i\in\mathbb{N}}a_i\int h(i,x) \, \mathrm{d} |\partial T_i|(x)\,.
\end{equation}
where $|\partial T_i|$ is the total variation measure associated to the signed measure $\partial T_i$ and
\begin{equation}\label{eq:phi-mass}
    \mathbb{M}^\phi(\mu)= \int_E \phi(\theta(x))\, \mathrm{d} \mathcal{H}^1(x)
\end{equation}
represents the cost of network construction, while the negative term is a \emph{pay-off} and represents an average, in probability, of the reward for the mass transported through $\mu$ by recovery plans $T_i$ in the case of the respective damages. The main result of this formulation is the existence of solutions for the energy minimization problem, see Section \ref{sec:notations} for the relevant notations.

\begin{theorem}\label{prop:existence_solutions}
Given $X, S_i,a_i,\nu,\phi,h$ as above, there exists a minimizer of the energy \eqref{intro_en} among $\left(\mu,\{T_i\}_{i \in \mathbb{N}}\right)$, where $\mu$ is a $1$-rectifiable measure and $T_i\in \mathcal{R}_1(X)$ are recovery plans satisfying the following conditions:
\begin{equation}
\|T_i\| \leq \mu,
\quad
\operatorname{supp} T_i \subseteq X \setminus S_i,
\quad
\partial T_i \preceq \nu
\qquad
\forall i \in \mathbb{N}.
\end{equation}
\end{theorem}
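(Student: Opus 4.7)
The plan is to apply the direct method of the calculus of variations. First, the infimum is finite: the trivial choice $\mu=0$, $T_i=0$ yields $\mathbb{E}^\phi_h=0$, while for any admissible competitor $|\partial T_i|(X)\leq |\nu|(X)$ because $\partial T_i\preceq \nu$, so the pay-off is bounded by $\|h\|_\infty |\nu|(X)$ and $\mathbb{E}^\phi_h\geq -\|h\|_\infty |\nu|(X)$. Fixing a minimizing sequence $(\mu^k,\{T_i^k\}_{i\in\mathbb{N}})$, we obtain a uniform bound $\mathbb{M}^\phi(\mu^k)\leq C$.

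The compactness step is the delicate one. First, each $T_i^k$ can be replaced, without changing its boundary nor enlarging its support, by an acyclic representative via a Smirnov-type decomposition of normal $1$-currents; this yields a uniform mass bound $\mathbb{M}(T_i^k)\leq \operatorname{diam}(X)\cdot|\nu|(X)$. Second, only the portion of $\mu^k$ actually used by some $T_i^k$ contributes to the pay-off, so we may shrink $\mu^k$ to $\bigvee_i \|T_i^k\|$, which is still $1$-rectifiable, still dominates every $T_i^k$, and has $\mathbb{M}^\phi$ no larger than before; this gives a uniform mass bound on $\mu^k$ as well. The hypothesis $\phi'(0)=+\infty$, combined with subadditivity and lower semicontinuity of $\phi$, then guarantees both lower semicontinuity of $\mathbb{M}^\phi$ along weak-$*$ limits and rectifiability of the limit $1$-measure. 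A diagonal extraction thus produces a subsequence along which $\mu^k\rightharpoonup \mu$ as Radon measures, $T_i^k\rightharpoonup T_i$ as $1$-currents, and $\partial T_i^k\rightharpoonup \partial T_i$, for every $i$.

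I would then verify admissibility in the limit. The support condition $\operatorname{supp} T_i\subseteq X\setminus S_i$ passes to the limit because $X\setminus S_i$ is closed. For $\partial T_i\preceq \nu$, I write $(\partial T_i^k)^{\pm}=g_i^{k,\pm}\nu^{\pm}$ with $g_i^{k,\pm}\in[0,1]$, and extract weak-$*$ limits $g_i^{\pm}\in[0,1]$ in $L^\infty(\nu^{\pm})$; the mutual singularity $\nu^+\perp\nu^-$ forces the Jordan decomposition of $\partial T_i$ to be $(g_i^+\nu^+, g_i^-\nu^-)$, whence $\partial T_i\preceq \nu$. The domination $\|T_i\|\leq \mu$ is precisely where the \emph{unoriented} formulation is crucial: $\mu^k-\|T_i^k\|\geq 0$ as positive measures for every $k$, and by compactness we may also assume $\|T_i^k\|\rightharpoonup \lambda_i$ for some Radon measure $\lambda_i$; passing to the limit in this inequality yields $\mu\geq \lambda_i$, while $\|T_i\|\leq \lambda_i$ holds by lower semicontinuity of total variation under weak convergence of currents, so $\|T_i\|\leq \mu$.

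Finally, lower semicontinuity of the full functional follows. The cost $\mathbb{M}^\phi$ is lower semicontinuous as discussed, while the pay-off is in fact continuous along the subsequence: $\int h(i,\cdot)\,\mathrm{d}|\partial T_i^k| = \int g_i^{k,+}h(i,\cdot)\,\mathrm{d}\nu^+ + \int g_i^{k,-}h(i,\cdot)\,\mathrm{d}\nu^-$ converges to $\int h(i,\cdot)\,\mathrm{d}|\partial T_i|$ by the weak-$*$ convergence of the densities $g_i^{k,\pm}$ against $h(i,\cdot)\in L^1(\nu^{\pm})$, and the series in $i$ converges by dominated convergence (the $i$-th term is bounded by $a_i\|h\|_\infty|\nu|(X)$ with $\sum_i a_i=1$). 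The main obstacle I expect is the compactness step: producing uniform mass bounds from a bound on $\mathbb{M}^\phi(\mu^k)$ alone, and ensuring that the acyclic and truncation reductions preserve rectifiability and the domination $\|T_i^k\|\leq \mu^k$; once this is in place, the unoriented nature of $\mu$ gracefully handles the cancellation issues highlighted in the introduction.
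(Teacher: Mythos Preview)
Your overall strategy matches the paper's direct method, and your treatment of the boundary constraint $\partial T_i\preceq\nu$ and of the continuity of the pay-off is essentially the same as the paper's. There are, however, two genuine gaps in the compactness step.

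First, the bound $\mathbb{M}(T_i^k)\leq \operatorname{diam}(X)\cdot|\nu|(X)$ does not follow from acyclicity and a Smirnov-type decomposition: the curves in a good decomposition are simple, but a simple curve in a compact set $X$ can have length much larger than $\operatorname{diam}(X)$. The paper obtains the mass bound differently. The good decomposition of the acyclic $T_i^k$ gives a pointwise \emph{density} bound $\theta_i^k(x)\leq \tfrac12\mathbb{M}(\partial T_i^k)\leq \tfrac12|\nu|(X)=:\beta$, since the $\pi$-measure of curves passing through any fixed point is at most $\pi(K(X))$. On $[0,\beta]$ the hypotheses on $\phi$ yield $t\leq c\,\phi(t)$ for some $c>0$, hence $\mathbb{M}(T_i^k)\leq c\,\mathbb{M}^\phi(T_i^k)\leq c\,\mathbb{M}^\phi(\mu^k)$. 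The same density bound lets one truncate $\mu^k$ (or take $\bigvee_i\|T_i^k\|$) and get a uniform mass bound on it by the same inequality.

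Second, and this is the heart of the matter, you assert that the weak-$*$ limit of the rectifiable measures $\mu^k$ is itself $1$-rectifiable with $\mathbb{M}^\phi(\mu)\leq\liminf_k\mathbb{M}^\phi(\mu^k)$. The lower-semicontinuity and rectifiability results invoked by the paper are for \emph{currents} under flat convergence, not for unoriented rectifiable measures under weak-$*$ convergence; one cannot simply orient $\mu^k$ and quote those results, because there is no control on the resulting boundaries. The paper explicitly does not claim that the weak limit $\sigma$ of $\mu^k$ is rectifiable. Instead it constructs the candidate network by hand as $\mu=(\sup_i\theta_i)\,\mathcal{H}^1\llcorner\bigcup_iE_i$, where $T_i=\llbracket E_i,\theta_i,\tau_i\rrbracket$ are the (rectifiable) limit currents, and then proves $\mathbb{M}^\phi(\mu)\leq\liminf_k\mathbb{M}^\phi(\mu^k)$ by a finite-dimensional truncation: for each $N$ it packages $T_1^k,\dots,T_N^k$ into a single rectifiable current $W_N^k$ with coefficients in $(\mathbb{R}^N,\|\cdot\|_\infty)$, so that $\mathbb{M}^\phi(W_N^k)=\int\phi(\max_{i\leq N}\theta_i^k)\,d\mathcal{H}^1\leq\mathbb{M}^\phi(\mu^k)$, uses the known lower semicontinuity of $\mathbb{M}^\phi$ for group-valued currents to pass to the limit in $k$, and finally sends $N\to\infty$ by monotone convergence. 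This vector-valued-current device is the main technical ingredient of the proof, and it is exactly what your sketch skips over.
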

We stress here that the inequality $\|T_i\| \leq \mu$ is intended in the sense of positive Radon measures, since $\mu$ is unoriented.
The construction of a minimizer is based on the theory of currents with coefficients in a normed group, originally adapted to the study of transportation networks in \cite{AnMa,AnMa2} and further developed in \cite{MOV,CMMPT20,MMT19,MMST21,LoScWirth25}.

Referring the reader to Section \ref{sec:comparison} for a discussion of the models, here we notice that, in this formulation, the network $\mu$ is un-oriented, that is, it allows for changes of direction in case of damages; if we try to prescribe the orientation by replacing $\mu$ with a current $T$, then the problem may not have solutions, as discussed in Section \ref{sec:orientation}.
Moreover, damages are characteristic functions of closed sets; in Section \ref{sec:prop_damages} we discuss the difficulties in dealing with more general damages in this Eulerian formulation.

\subsection{Lagrangian formulation.}
If one would like to prescribe the orientation of the network and treat more general damages, then a Lagrangian formulation of the problem seems more appropriate.
    The main ingredients of this version are the following.
\begin{itemize}
    \item Each damage $f_i$ is an upper semicontinuous function, without the restriction of being the characteristic function of a closed set.
    \item Any competitor is a couple $(P,\{P_i\}_{i \in \mathbb{N}})$, where both $P$ and each $P_i$ are so called \emph{traffic plans}, namely Radon measures on the space $K(X)$ of $1$-Lipschitz curves from $[0,+\infty)$ to $X$ which are eventually constant, with the additional condition that the average transport times are finite, namely that
    \begin{equation}
        \int_{K(X)} T(\gamma) \, \mathrm{d} P(\gamma)<\infty,
    \end{equation}
    where $T(\gamma)$ is the minimum time after which $\gamma$ is constant.
    Since each curve can be thought as a unit-mass transport from its starting point to its final point, each transport plan represents a ``weighted collection" of oriented transports. As in the Eulerian formulation, $P$ represents the network to be built, while each $P_i$ is a recovery plan in the event of damage $f_i$, which is a submeasure of $P$ and transports a portion of $\nu^-$ onto a portion of $\nu^+$. We stress the fact that, unlike the Eulerian formulation, in this framework the network $P$ is \emph{oriented} and the recovery plans $P_i$ are bound to match this orientation.
\end{itemize}
    The energy of a competitor $(P,\{P_i\}_{i \in \mathbb{N}})$ is defined as
    \begin{equation}\label{eq:energy_lagr}
    \mathbb{E}\big((P,\{P_i\}_{i \in \mathbb{N}})\big)
    \coloneqq
           \mathbb{M}^\phi(P) - \sum_{i=1}^{+\infty} a_i \int h(i,x) \, \mathrm{d} \big((\pi_0)_\# f_iP_i + (\pi_\infty)_\# f_iP_i\big),
    \end{equation}
    where $\mathbb{M}^\phi(P)$ is the $\phi$-mass of $P$ and models the cost of construction of the network, while the negative term is a pay-off and, as in the Eulerian version, it rewards the resilience of the network in case of damages. More precisely, the terms in the energy are defined as follows.
\begin{itemize}
    \item $\mathbb{M}^\phi(P)$ is defined as
        \begin{equation}\label{eq:phi_mass_traffic}
    \mathbb{M}^\phi(P)
    =
    \int_{K(X)} \int_0^{+\infty} \frac{\phi(|\gamma(t)|_P)}{|\gamma(t)|_P} \dot{\gamma}(t) \, \mathrm{d} t \, \mathrm{d} P(\gamma),
\end{equation}
where $|x|_P=P\big( \{ \gamma \in K(X) \colon x \in \operatorname{Im}(\gamma)\}\big)$; see \cite[Chapter 4]{BCMbook09} for a discussion concerning the conditions on $P$ under which $\mathbb{M}^\phi(P)$ can be expressed in a form similar to \eqref{eq:phi-mass}.
    \item $f_i P_i$ is the traffic plan $P_i$ penalized by the damage $f_i$, that is the traffic plan defined as
\begin{equation}\label{eq:f_iP_i}
    f_iP_i(A) \coloneqq
    \int_{A} \inf_{t \in \mathbb{R}^+} f_i(\gamma(t)) \, \mathrm{d} P_i(\gamma)
\qquad
\forall A \subseteq K(X).
\end{equation}
Thus, $f_iP_i$ is the traffic plan $P_i$ where each curve $\gamma$ has a weight, relative to $P_i$, given by the most penalizing value of $f$ on $\gamma$. Notice that the integrand in \eqref{eq:f_iP_i} is measurable because $f_i$ is Borel and the infimum can be taken on $\mathbb{Q}^+$, since $f_i$ is upper semi-continuous, hence the infima on $\mathbb{R}^+$ and on a dense subset coincide.

    \item The functions $\pi_0$ and $\pi_\infty$ associate to each curve $\gamma \in K(X)$ its initial and final points, respectively; the latter is well-defined because curves in $K(X)$ are eventually constant. The symbols $(\pi_0)_\# f_iP_i$ and $(\pi_\infty)_\# f_iP_i$ are the push-forward of the traffic plan $f_iP_i$ through $\pi_0$ and $\pi_\infty$, respectively; thus they represent the mass transported by $P_i$, with the penalization given by damage $f_i$.
    Notice that, while $\pi_0$ is continuous, $\pi_\infty$ is not; nonetheless, the push-forward $(\pi_\infty)_\#$ is well defined since $\pi_\infty$ is measurable being the pointwise limit, as $T \to +\infty$, of the continuous functions $\pi_T$.
\end{itemize}

The main result concerning this Lagrangian formulation is the existence of minimizers for the energy defined above; see Section \ref{sec:notations} for relevant notations.

\begin{theorem}\label{thm:existence_lagr}
Given $X, f_i,a_i,\nu,\phi,h$ as above, assume $\operatorname{dist}(\operatorname{supp} \nu^-,\operatorname{supp}\nu^+)>0$ and $\lim_{t \to +\infty} \phi(t)=+\infty$. Then there exists a minimizer of the energy \eqref{eq:energy_lagr} among $\left(P,\{P_i\}_{i \in \mathbb{N}}\right)$, where $P,P_i$ are traffic plans that satisfy the following conditions:
\begin{equation}
P_i \leq P,
\quad
(\pi_\infty)_\# P_i - (\pi_0)_\# P_i \preceq \nu
\qquad
\forall i \in \mathbb{N}.
\end{equation}
\end{theorem}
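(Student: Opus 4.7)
The plan is to apply the direct method of the calculus of variations. Since $(0,\{0\})$ is an admissible competitor with zero energy, the infimum $m$ satisfies $m\leq 0$, and I take a minimizing sequence $(P^n,\{P_i^n\})$ with $\mathbb{E}(P^n,\{P_i^n\})\to m$. I would split the argument into three stages: (i) \emph{a priori} uniform bounds on the total masses and transport times; (ii) extraction of a weak$^*$-limit with stability of the admissibility constraints; (iii) lower semi-continuity of $\mathbb{E}$.

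The main obstacle, as already stressed in the introduction, is stage (i). First, I would use that the constraint $(\pi_\infty)_\# P_i^n - (\pi_0)_\# P_i^n \preceq \nu$, together with the disjointness $\operatorname{supp}\nu^-\cap\operatorname{supp}\nu^+=\emptyset$ ensured by $d:=\operatorname{dist}(\operatorname{supp}\nu^-,\operatorname{supp}\nu^+)>0$, to reduce — via a Smirnov-type decomposition of traffic plans — to the case where each $P_i^n$ is concentrated on curves starting in $\operatorname{supp}\nu^-$ and ending in $\operatorname{supp}\nu^+$, with $(\pi_0)_\# P_i^n\leq \nu^-$ and $(\pi_\infty)_\# P_i^n\leq \nu^+$; in particular $P_i^n(K(X))\leq C_\nu:=\min(\nu^-(X),\nu^+(X))$. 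Truncating each curve at the first hitting time of $\operatorname{supp}\nu^+$ then makes its length lie in $[d,\operatorname{diam}(X)]$, so that $\int T\,\mathrm{d}P_i^n\leq \operatorname{diam}(X)\cdot C_\nu$. The pay-off is consequently bounded above by $2\|h\|_\infty C_\nu$ (using $f_i\leq 1$, $P_i^n(K(X))\leq C_\nu$, and $\sum_i a_i=1$), and the convergence $\mathbb{E}(P^n,\{P_i^n\})\to m$ yields $\mathbb{M}^\phi(P^n)\leq 2\|h\|_\infty C_\nu+o(1)$. I would then replace $P^n$ by $\widehat P^n:=\sup_i P_i^n$; since $\widehat P^n\leq P^n$, monotonicity of $\mathbb{M}^\phi$ in the network gives $\mathbb{M}^\phi(\widehat P^n)\leq \mathbb{M}^\phi(P^n)$, while every curve in $\operatorname{supp}\widehat P^n$ still has length at least $d$. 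A subadditivity--discretization argument, proved first on atomic plans $P=\sum_k c_k\,\delta_{\gamma_k}$ with essentially disjoint curves via $\phi\bigl(\sum_k c_k\bigr)\leq \sum_k\phi(c_k)\leq \tfrac{1}{d}\sum_k\phi(c_k)L_k = \mathbb{M}^\phi(P)/d$, and then extended by approximation, would give $\phi(\widehat P^n(K(X)))\leq \mathbb{M}^\phi(\widehat P^n)/d$. The assumption $\lim_{t\to+\infty}\phi(t)=+\infty$ inverts this into a uniform bound on $\widehat P^n(K(X))$, whence also $\int T\,\mathrm{d}\widehat P^n\leq \operatorname{diam}(X)\cdot \widehat P^n(K(X))$.

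Once these uniform bounds are secured, tightness of $\{\widehat P^n\}_n$ and of each $\{P_i^n\}_n$ as Radon measures on the Polish space $K(X)$ (with the topology of uniform convergence) would follow from Arzelà--Ascoli applied to $1$-Lipschitz curves valued in the compact $X$, together with the Markov-type estimate $\widehat P^n(\{T>T_0\})\leq T_0^{-1}\int T\,\mathrm{d}\widehat P^n$ to control curves with large transport time. A diagonal extraction would then produce a common subsequence along which $\widehat P^n\overset{*}{\rightharpoonup} P$ and $P_i^n\overset{*}{\rightharpoonup} P_i$ for every $i$. The inequality $P_i\leq P$ passes to the weak limit, and the boundary constraint is preserved since $(\pi_0)_\#$ is weakly continuous and, thanks to the uniform length bound, $\pi_\infty$ agrees on $\operatorname{supp}P_i^n$ and on $\operatorname{supp}P_i$ with the continuous map $\pi_T$ for any $T\geq \operatorname{diam}(X)$.

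For stage (iii), I would invoke the classical lower semi-continuity of $P\mapsto\mathbb{M}^\phi(P)$ on traffic plans with uniformly bounded transport time (see \cite[Chapter 4]{BCMbook09}). For the pay-off, the map $\gamma\mapsto\psi_i(\gamma):=\inf_{t\in\mathbb{Q}^+}f_i(\gamma(t))$ is upper semi-continuous on $K(X)$, since $f_i$ is upper semi-continuous and $K(X)$ carries uniform convergence; hence $(h(i,\cdot)\circ\pi_0)\cdot\psi_i$ and, for $T\geq\operatorname{diam}(X)$, $(h(i,\cdot)\circ\pi_T)\cdot\psi_i$ are bounded upper semi-continuous functions on $K(X)$, and the Portmanteau theorem gives
\[
\limsup_{n\to\infty}\int h(i,\cdot)\,\mathrm{d}\bigl((\pi_0)_\#(f_i P_i^n)+(\pi_\infty)_\#(f_i P_i^n)\bigr)\leq \int h(i,\cdot)\,\mathrm{d}\bigl((\pi_0)_\#(f_i P_i)+(\pi_\infty)_\#(f_i P_i)\bigr).
\]
A dominated-convergence step in $i$ (using $\sum_i a_i=1$ and the uniform bound $2\|h\|_\infty C_\nu$ on each integrand) would conclude the lower semi-continuity of $\mathbb{E}$, so that $(P,\{P_i\})$ is a minimizer. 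The hardest step, as reiterated, will be the subadditivity-based mass bound in stage (i), where the interplay of the hypotheses $d>0$ and $\lim\phi=+\infty$ is essential.
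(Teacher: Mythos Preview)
Your overall direct-method scheme coincides with the paper's, but several of the technical substitutions you propose for stage~(i) do not work as stated, and the paper takes a different route at precisely those points. First, the replacement $\widehat P^n=\sup_i P_i^n$ together with the claimed ``monotonicity of $\mathbb{M}^\phi$ in the network'' is not justified: for a general traffic plan $P'\leq P$ one has $|x|_{P'}\leq |x|_P$, so $\phi(|x|_{P'})/|x|_{P'}\geq \phi(|x|_P)/|x|_P$, and the integral definition \eqref{eq:phi_mass_traffic} gives no obvious comparison between $\mathbb{M}^\phi(P')$ and $\mathbb{M}^\phi(P)$. The paper instead first passes to \emph{loop-free} plans via \cite[Proposition~4.6]{BCMbook09}, which replaces every curve by an injective one with the same endpoints (hence leaving the pay-off unchanged and only increasing $\inf_t f_i(\gamma(t))$); for loop-free plans one has the area formula $\mathbb{M}^\phi(P)=\int\phi(|x|_P)\,\mathrm{d}\mathcal{H}^1$, from which $\mathbb{M}^\phi(P_i^k)\leq\mathbb{M}^\phi(P^k)$ follows. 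Second, your ``subadditivity--discretization then approximation'' detour for the mass bound is both vague and unnecessary: the paper derives $\mathbb{M}^\phi(P^k)\geq\frac{D}{2}\phi(\mathbb{M}(P^k))$ in one line from \eqref{eq:phi_mass_traffic}, using that $|x|_P\leq\mathbb{M}(P)$ and that subadditivity/monotonicity of $\phi$ give $\phi(s)/s\geq \phi(t)/(2t)$ whenever $0<s\leq t$, together with $L(\gamma)\geq D$ for curves joining $\operatorname{supp}\nu^-$ to $\operatorname{supp}\nu^+$; then $\lim_{t\to\infty}\phi(t)=+\infty$ inverts the inequality.

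Two further points. Your ``Smirnov-type decomposition'' is a tool for $1$-currents, not for traffic plans: the constraint $(\pi_\infty)_\#P_i-(\pi_0)_\#P_i\preceq\nu$ bounds the Jordan parts of the \emph{difference}, not the individual pushforwards, so it does not by itself force curves to start in $\operatorname{supp}\nu^-$ and end in $\operatorname{supp}\nu^+$, nor does it give $P_i(K(X))\leq C_\nu$; removing curves with ``wrong'' endpoints decreases the pay-off and hence increases the energy. The paper achieves this restriction only \emph{after} the loop-free reduction, by restricting $P^k$ (hence $P_i^k$) to the set $\{\gamma:\gamma(0)\in\operatorname{supp}\nu^-,\ \gamma(T(\gamma))\in\operatorname{supp}\nu^+\}$. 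Finally, your truncation of curves at the first hitting time of $\operatorname{supp}\nu^+$ to force $L(\gamma)\leq\operatorname{diam}(X)$ changes both $\pi_\infty(\gamma)$ and $\inf_t f_i(\gamma(t))$, hence the pay-off, in a way you do not control. The paper does \emph{not} bound lengths uniformly; it only bounds the average transport time $\int T(\gamma)\,\mathrm{d}P_i^k$ (via \cite[Lemma~3.39]{BCMbook09}) and then, for the upper semi-continuity of the $\pi_\infty$-part of the pay-off, uses Chebyshev to show $P_i^k(\{T>M\})<\varepsilon$ and replaces $\pi_\infty$ by the continuous evaluation $\gamma\mapsto\gamma(M)$ up to an $\varepsilon\cdot\sup h$ error.
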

As already said, the Lagrangian formulation models a network with prescribed orientation and allows us to deal with a wider class of damages, that is just upper semi-continuous ones. Referring again to \ref{sec:comparison} for a detailed discussion on the models, we only mention here that the hypotheses $\operatorname{dist}(\operatorname{supp} \nu^-,\operatorname{supp}\nu^+)>0$ and $\lim_{t \to +\infty} \phi(t)=+\infty$ in Theorem \ref{thm:existence_lagr} are necessary, as shown by Examples \ref{ex:distance} and \ref{ex:limit}. Instead, these assumptions are not needed for the existence of a minimizer in the Eulerian formulation.

\subsection{Relations with results in the literature}

An overview on the modern theory of branched optimal transport and its variational description was given by Bernot, Caselles and Morel~\cite{BCMbook09}, based on the Lagrangian formulation proposed by Maddalena, Morel and Solimini~\cite{MadSolMor03}, and the Eulerian formulation proposed by Xia~\cite{Xia03}. Many recent works highlight the central role of this problem in a wide range of applications, see for instance \cite{ConGolOtSer18,Bethuel20,DePGolRuf23,BaLeMassOrl23,CosGolKos24}.

During the last decade, a series of works has clarified the well-posedness of branched transport problems, including general formulations (\cite{PaoSteopt06,Pegon17,CDRMSnonlin17,BWgeneral18,MW19,XiaSun25}), regularity (\cite{Xia04interior,BCM08,Xia11bdry,BraSol14,PegSantXia19}), stability (\cite{CDRMcalc19,CDRMmail19,CDRMcpam21}), and uniqueness (\cite{CaldMS23}).

Branched optimal transport with a pay-off was firstly introduced by \cite{XiaXu} as a model for the problem of designing an optimal network which is not expected to carry all the mass from the source to the target, depending on the global benefit with respect to the cost of the transport. From a mathematical point of view, this translates into a loosening of the boundary constraint: one requires that possibly just a part of $\nu$ is transported (in fact they do not assume $\nu^-(X) = \nu^+(X)$) and authors introduce a model with a payoff term that rewards transported mass.

Similar ideas arise in applied models of network resilience, where optimal structures are required to maintain at least partial functionality under damage or fluctuations. Examples include studies of loop-forming optimal networks under perturbations~\cite{KatiforiSzollosiMagnasco2010} and analyses of resilience in transportation infrastructures~\cite{GaninEtAl2017}. See \cite{AD20} for a comprehensive review of the state-of-the-art.

The present paper combines these two perspectives. Building on the variational theory of branched transport, we propose new formulations that include random damage and recovery plans, capturing the trade-off between construction cost and the benefit of maintaining partial functionality. Our results on existence of minimisers extend the classical well-posedness theory to a setting where redundancy and robustness are part of the optimisation process.

\section*{Acknowledgments}

We would like to thank Benedikt Wirth for inspiring discussions. The authors are partially supported by GNAMPA-INdAM. The research of the first and third named authors has been supported by STARS@unipd project “QuASAR – Questions About Structure And Regularity (MASS STARS MUR22 01). The authors have been supported by the PRIN project 2022PJ9EFL "Geometric Measure Theory: Structure of Singular Measures, Regularity Theory and Applications in the Calculus of Variations" CUP:E53D23005860006.
The third named author has carried out part of this research work while visiting Institute for Advanced Study in Princeton, supported by the National Science Foundation under Grant No. DMS-1926686, that we wish to thank.

\section{Notations}\label{sec:notations}
We collect here the relevant notation used in the paper, referring the reader to \cite{Federer} and \cite{BCMbook09} for extensive discussions on the corresponding topics.

\begin{longtable}{@{\extracolsep{\fill}}lp{0.75\textwidth}}
\multicolumn{2}{c}{\textbf{Measures}}\\[4pt]
$\mathbf{1}_{A}$             &  Characteristic function of the set $A$;\\
$\mathcal{H}^s$		 &	$s$-dimensional Hausdorff measure;\\
$\mu \llcorner E$     &	Restriction of the measure $\mu$ to the set $E$;\\
$\mathcal{M}(U), \mathcal{M}_+(U)$	  &	 Space of signed and positive Radon measures on $U$;\\
$\mathcal{M}^k (U)$		  & 	Space of positive $k$-rectifiable Radon measures on $U$, namely measures of the form $\mu=\theta \mathcal{H}^{k} \llcorner E$, where $E \subset U$ is a $k$-rectifiable set and  $\theta \in L^1(\mathcal{H}^k\llcorner E,\mathbb{R}^+)$.
\\
$|\mu|$                 &   Total variation measure of $\mu$;\\
$\mu \preceq \nu$       &   Inclusions of the Jordan decompositions $\mu=\mu^+-\mu^-$, $\nu=\nu^+-\nu^-$ of the two signed measures $\mu,\nu$, namely $\mu \preceq \nu$ if and only if $\mu^+ \leq \nu^+$ and $\mu^- \leq \nu^-$;\\
$\mathbb{M}^\phi(\mu)$          &   $\phi$-mass of $\mu=\theta \mathcal{H}^k \llcorner E$, namely $\mathbb{M}^\phi(\mu)\coloneqq \int_E \phi(\theta(x)) \, \mathrm{d} \mathcal{H}^k(x)$;\\
\multicolumn{2}{c}{\textbf{}}\\[-1pt]
\multicolumn{2}{c}{\textbf{ Currents }}\\[4pt]
$\partial T$            &   Boundary of the current $T$;\\
$\operatorname{supp} T$               &   Support of the current $T$;\\
$\mathbb{M}(T)$                 &   Mass of the current $T$
\\
$\|T\|$				   &	   Mass measure of the current $T$;\\
$\mathcal{D}_k(U)$
						&   Space of $k$-dimensional currents in $U$ ;\\
$\mathcal{R}_k(U)$
						&   Space of $k$-rectifiable currents in $U$;\\
$\llbracket \gamma \rrbracket$             & $1$-rectifiable current associated to the Lipschitz curve $\gamma$;\\
$\llbracket E,\theta, \tau \rrbracket$                                               &    $k$-rectifiable current induced by the $k$-rectifiable set $E$ with multiplicity $\theta$ and unit orientation $\tau$;\\
$\mathbb{M}^\phi(T)$          &   $\phi$-mass of the current $T=\llbracket E, \theta, \tau \rrbracket$, namely $\mathbb{M}^\phi(T)\coloneqq \int_E \phi(\theta(x)) \, \mathrm{d} \mathcal{H}^k(x)$.\\
\multicolumn{2}{c}{\textbf{}}\\[-1pt]
\multicolumn{2}{c}{\textbf{Traffic plans}}\\[4pt]
$K(X)$                &  Space of $1$-Lipschitz curves from $[0,+\infty)$ to the metric space $X$ which are eventually constant;\\
$T(\gamma)$           &  Transport time of the curve $\gamma \in K(X)$, that is the minimum $T\geq 0$ for which $\gamma$ is constant in $[T,+\infty)$;\\
$L(\gamma)$           &  Length of the curve $\gamma \in K(X)$;\\
$|x|_P$               &  Multiplicity of the traffic plan $P$ at $x$: $|x|_P= P\big( \{ \gamma \in K(X) \colon x \in \operatorname{Im}(\gamma)\}\big)$.\\
\end{longtable}

\section{Existence of minimizers for the Eulerian formulation}
In this section we prove Theorem \ref{prop:existence_solutions}.
We first recall the notion of \emph{good decomposition}, that exists - see \cite[Theorem 3.5]{CDRMcalc19} - for a normal current $T$ which is \emph{acyclic}, namely such that there exists no non-trivial current $S$ satisfying
\begin{equation}
\partial S=0,
\qquad
\mathbb{M}(T)
=
\mathbb{M}(T - S) + \mathbb{M}(S).
\end{equation}

\begin{proposition}[good decomposition]\label{prop:good_decomposition}
An acyclic normal $1$-dimensional current $T$ on $X$ has a \emph{good decomposition}, that is there exists a measure $\pi \in \mathcal{M}_+(K(X))$ concentrated on simple curves which satisfies:
\begin{subequations}\label{eq:def_good_decomposition}
\begin{equation}\label{eq:good_dec_a}
T= \int_{K(X)} \llbracket \gamma \rrbracket \, \mathrm{d} \pi(\gamma),
\end{equation}
\begin{equation}\label{eq:good_dec_b}
\mathbb{M}(T) = \int_{K(X)} \mathbb{M}(\llbracket \gamma \rrbracket) \, \mathrm{d} \pi(\gamma)
=
\int_{K(X)} \mathcal{H}^1(\operatorname{Im} \gamma) \, \mathrm{d} \pi(\gamma),
\end{equation}
\begin{equation}\label{eq:good_dec_c}
\mathbb{M}(\partial T)
=
\int_{K(X)} \mathbb{M}(\partial \llbracket \gamma \rrbracket) \, \mathrm{d} \pi(\gamma)
=
2 \pi(K(X)).
\end{equation}
\end{subequations}
\end{proposition}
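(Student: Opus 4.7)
The plan is to reduce the claim to Smirnov's classical decomposition theorem for normal $1$-currents, in the form tailored to the acyclic case as in \cite{CDRMcalc19}.

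First, I would invoke Smirnov's structure theorem: any normal $1$-current $T$ on $X$ admits a decomposition $T = T_{\mathrm{arc}} + T_{\mathrm{sol}}$, where $T_{\mathrm{arc}}$ is an integral superposition of oriented simple arcs and $T_{\mathrm{sol}}$ is an integral superposition of elementary solenoids, with $\partial T_{\mathrm{sol}} = 0$ and $\mathbb{M}(T) = \mathbb{M}(T_{\mathrm{arc}}) + \mathbb{M}(T_{\mathrm{sol}})$. Applying the definition of acyclicity with $S = T_{\mathrm{sol}}$ forces $T_{\mathrm{sol}} = 0$, so that $T = T_{\mathrm{arc}}$ is entirely a superposition of arcs.

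Second, writing this superposition as $T = \int \llbracket \gamma \rrbracket \, \mathrm{d}\pi(\gamma)$ for a positive Radon measure $\pi$ on Lipschitz curves, I would reparameterize each curve at constant unit speed up to its transport time and make it constant afterwards, so that $\pi$ is supported in $K(X)$. Discarding constant curves (which contribute nothing to $T$) and applying a loop-erasing step where necessary, $\pi$ may be assumed concentrated on non-constant simple curves; this yields \eqref{eq:good_dec_a}.

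Third, the identities \eqref{eq:good_dec_b} and \eqref{eq:good_dec_c} encode the \emph{no-cancellation} property built into Smirnov's construction: $\pi$ can be chosen so that $\|T\| = \int \|\llbracket \gamma \rrbracket\| \, \mathrm{d}\pi(\gamma)$ as measures on $X$, and analogously $|\partial T| = \int |\partial \llbracket \gamma \rrbracket| \, \mathrm{d}\pi(\gamma)$. Integrating the first and using $\mathbb{M}(\llbracket \gamma \rrbracket) = \mathcal{H}^1(\operatorname{Im} \gamma)$ for simple $\gamma$ gives \eqref{eq:good_dec_b}. For \eqref{eq:good_dec_c}, since $\pi$-a.e.\ curve is simple and non-constant, $\partial \llbracket \gamma \rrbracket = \delta_{\gamma(\infty)} - \delta_{\gamma(0)}$ has mass exactly $2$, so integrating delivers $2\pi(K(X))$.

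The main obstacle is establishing the non-cancellation property, as the remaining steps amount to reparameterization and bookkeeping. In Smirnov's original argument, non-cancellation follows from selecting $\pi$ as an extreme point of the convex set of positive measures representing $T$, via a Choquet-type argument; one then shows that at an extreme point there can be no two subfamilies of curves whose oriented tangents cancel on a set of positive measure, nor any two whose endpoints cancel. An alternative constructive route is to approximate $T$ by polyhedral $1$-currents, decompose each of these combinatorially into arcs and cycles by a flow argument, and pass to the limit, invoking acyclicity of $T$ to rule out persistence of cycles.
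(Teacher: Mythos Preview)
Your sketch is essentially correct and follows the standard Smirnov-based route. Note, however, that the paper does not supply its own proof of this proposition: it is stated as a recalled result, with a direct citation to \cite[Theorem~3.5]{CDRMcalc19}. That reference (building on Smirnov's decomposition) is precisely the source of the argument you outline---splitting a normal $1$-current into arcs and solenoids with mass additivity, using acyclicity to kill the solenoidal part, and invoking the no-cancellation property to obtain \eqref{eq:good_dec_b} and \eqref{eq:good_dec_c}. So there is no discrepancy in approach; you have simply expanded what the paper treats as a black-box citation. One minor remark: the loop-erasing step you mention is superfluous, since Smirnov's arcs are already simple.
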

\noeqref{eq:good_dec_a} \noeqref{eq:good_dec_b} \noeqref{eq:good_dec_c}

\begin{proof}[Proof of Theorem \ref{prop:existence_solutions}]
Let us call $m_0$ the infimum in \eqref{intro_en} and let us consider a minimizing sequence $\{\left(\mu^k,\{T_i^k\}_{i \in \mathbb{N}}\right)\}^{k \in \mathbb{N}}$, namely such that
\begin{equation}\label{eq:energy_minimizing}
\lim_{k \to +\infty} \mathbb{E}_{h}^\phi\left(\mu^k,\{T_i^k\}_{i \in \mathbb{N}}\right)
=
m_0.
\end{equation}
We can assume without loss of generality that every $T_i^k$ is acyclic.
In fact, by \cite[Proposition 3.12]{PaoSteopt06} we can remove all cycles from each $T_i^k$ without changing $\partial T_i^k$ and the inclusion $T_i^k \leq \mu^k$, thus without affecting the energy of the minimizing sequence. The proof is divided in few steps.

\begin{description}

\item[$\bullet$ A priori bounds.]


Indeed, of course the null competitor has zero energy, thus $m_0 < + \infty$. On the other hand, since $h$ is bounded and $\nu$ has finite mass, the condition $\partial T_i \preceq \nu$ implies that the pay-off term is uniformly bounded, independently of the competitor. This in particular proves $m_0>-\infty$ and
\begin{equation}\label{e:bound}
    \sup_{k \in \mathbb{N}}\mathbb{M}^\phi(\mu^k) < + \infty.
\end{equation}
Since the uniform bound on $\mathbb{M}(\partial T_i^k)$ follows immediately from the conditions $\partial T_i^k \preceq \nu$, in order to prove 
\[
\sup_{i,k \in \mathbb{N}} \left(\mathbb{M}(T_i^k) + \mathbb{M}(\partial T_i^k) \right) < + \infty,
\]
it remains to estimate the mass of $T_i^k$.

Since $\|T_i^k\| \leq {\mu^k}$ as rectifiable Radon measures, the uniform bound \eqref{e:bound} on $\mathbb{M}^\phi(\mu^k)$ gives the same uniform bound on $\mathbb{M}^\phi(T_i^k)$.
We want to use this information to establish the analogous inequality for the masses $\mathbb{M}(T_i^k)$.
In order to do so, let us fix $i,k \in \mathbb{N}$ and let us call $T_i^k= T$ for simplicity of notations. Since $T$ is $1$-rectifiable, thus $T =\llbracket E,\theta,\tau \rrbracket$, for some $1$-rectifiable set $E$, some density $\theta$ and orientation $\tau$.
Since $T$ is is acyclic, by Proposition \ref{prop:good_decomposition} it has a good decomposition $\pi \in \mathcal{M}_+(K(X))$.
Hence, applying \cite[(3.8)]{CDRMcalc19} we have
\begin{equation}\label{eq:uniform_bound_density}
\begin{aligned}
    \theta(x)
&=
\pi(\{\gamma \in K(X) \colon x \in \operatorname{Im} (\gamma)\})
\\[4pt]
&\leq
\pi(K(X))
\\[4pt]
\overset{\eqref{eq:good_dec_c}}&{=}
\frac{\mathbb{M}(\partial T)}{2}
\\[4pt]
&\leq
\frac{|\nu|(X)}{2}
\eqqcolon
\beta
\qquad
\text{for } \mathcal{H}^1 \text{-a.e. }x \in E.
\end{aligned}
\end{equation}
The hypotheses on $\phi$ imply \cite[Lemma 5.1 (6)]{MMST21} the existence of $c>0$ such that $t \leq c \phi(t)$ for every $t \in [0,\beta]$, thus
\begin{equation}\label{eq:estimate_mass_rectifiable}
\mathbb{M}(T)
=
\int_E \theta(x) \, \mathrm{d} \mathcal{H}^1(x)
\leq
\int_E c\phi(\theta(x)) \, \mathrm{d} \mathcal{H}^1(x)
=
c \mathbb{M}^\phi(T).
\end{equation}
Hence, the uniform bound on $\mathbb{M}^\phi(T)$ proved above yields the uniform bound on $\mathbb{M}(T)$.


\item[$\bullet$ Pre-compactness of the minimizing sequence.]

For every $k \in \mathbb{N}$ we have $\mu^k = \theta_k \mathcal{H}^1 \llcorner E_k$ for some rectifiable set $E_k$ and summable $\theta_k$. We define $\tilde{\mu}^k = \tilde{\theta}_k \mathcal{H}^1 \llcorner E_k$, where
\begin{equation}
\tilde{\theta}_k \coloneqq \min\{\theta_k, \beta\}
\end{equation}
and $\beta$ is defined in \eqref{eq:uniform_bound_density}. Since $\beta$ is the uniform bound on the density of $T_i^k$, we have $T_i^k \leq \tilde{\mu}^k$ for every $i,k \in \mathbb{N}$.

The uniform bound on $\tilde{\theta}_k$ and the same arguments of the previous point imply that the total masses of $\tilde{\mu}^k$ are uniformly bounded. 
Up to replacing $\mu^k$ with $\tilde{\mu}^k$, from now on we assume $\mu^k = \tilde{\mu}^k$ for every $k \in \mathbb{N}$. The replacement is justified because $\mathbb{M}^\phi(\tilde{\mu}^k)\leq \mathbb{M}^\phi({\mu}^k)$ and $\tilde{\mu}^k$ supports the currents $\{T_i^k\}$.


For every $i\in\mathbb{N}$, the sequence $(T_i^{k})^{k \in \mathbb{N}}$ is  relatively compact in the space of normal currents with respect to the flat norm \cite[Theorem 4.2.17]{Federer}, by the uniform bound on $\mathbb{M}(T_i^k) + \mathbb{M}(\partial T_i^k) $. Moreover, since $\sup_k |\mu^k|(X)<+\infty$, a diagonal argument yields the existence of a subsequence $(k_j)_{j \in \mathbb{N}}$, a family $\{T_i\}_{i \in \mathbb{N}}$ of normal currents and a measure $\sigma$ in $\mathcal{M}(X)$ (not necessarily rectifiable a priori) such that
\begin{equation}
T_i^{k_j} 
\xrightarrow[j \to +\infty]{\mathbb{F}}
T_i
\quad
\forall i \in \mathbb{N},
\qquad
\mu^{k_j}\xrightharpoonup[j \to +\infty]{\ast} \sigma.
\end{equation}
Since $\mathbb{M}^\phi$ is lower semicontinuous with respect to the flat convergence \cite{CDRMSnonlin17}, it holds
\begin{equation}
\mathbb{M}^\phi(T_i) \leq
\liminf_{k \to +\infty} \mathbb{M}^\phi(T_i^k)
\qquad
\forall i \in \mathbb{N}.
\end{equation}
and the right-hand side is finite (and indeed uniformly bounded with respect to $i$) by the uniform bound on $\mathbb{M}^\phi(T_i^k)$, which follows from \eqref{e:bound}.
This, together with $\phi'(0)=+\infty$ and \cite[Proposition 2.8]{CDRMSnonlin17} or [Proposition 2.32]\cite{BWgeneral18}, yields in particular the rectifiability of each $T_i$.


\item[$\bullet$ Construction of the limit competitor.]

The sequence $T_i= \llbracket E_i,\theta_i, \tau_i \rrbracket$ of rectifiable currents is given by the previous point. Since for every $i\in\mathbb{N}$ the set $S_i$ is open and $\operatorname{supp} T_i^k \subseteq X \setminus S_i$ for every $k\in\mathbb{N}$, we have $$\operatorname{supp} T_i \subseteq X \setminus S_i.$$  The problem with the compactness of the previous point is that $\sigma$ is not rectifiable a priori. We construct ``by hands" a $1$-rectifiable measure $\mu$ which supports the $T_i$'s, by defining $\mu \coloneqq \theta \mathcal{H}^1 \llcorner E$,  where
\begin{equation}
\theta(x)
\coloneqq
\sup_{i \in \mathbb{N}} \theta_i(x),
\qquad
E = \bigcup_{i \in \mathbb{N}} E_i.
\end{equation}
We first prove that $\mu$ has finite mass. Firstly we define $\mu_N \coloneqq \xi_N \mathcal{H}^1 \llcorner E$, where 
$$\xi_N(x)
\coloneqq
\max_{i=1,\dots,N} \theta_i(x)
\quad
\forall N \in \mathbb{N}.$$
Then by the monotone convergence theorem we deduce
\begin{equation}
\int_E \theta(x) \, \mathrm{d} \mathcal{H}^1(x)
=
\lim_{N \to + \infty}  \int_E \xi_N(x) \, \mathrm{d} \mathcal{H}^1(x)
\leq
\sigma(X),
\end{equation}
where the inequality is a consequence of
\begin{equation}
\int_E \xi_N(x) \, \mathrm{d} \mathcal{H}^1(x)
=
\sum_{i=1}^N \int_{F_i} \theta_i(x) \, \mathrm{d} \mathcal{H}^1(x)
\overset{\|T_i\| \leq \sigma}{\leq}
\sum_{i=1}^N \sigma(F_i)
\leq
\sigma(E),
\end{equation}
where $$F_i \coloneqq \{x \in E \colon \xi_N=\theta_i\} \setminus \bigcup_{j<i} \{x \in E \colon \xi_N=\theta_j\}.$$ By construction we have $\|T_i\| \leq \mu$; thus, in order to prove that $(\mu, \{T_i\}_i)$ is an admissible competitor, it remains to prove that $\mu$ has finite $\phi$-mass and that $\partial T_i \preceq \nu$.
We are going to obtain the former as a consequence of
\begin{equation}\label{eq:lsc_phimass}
    \mathbb{M}^\phi(\mu)\leq\liminf_{k \to +\infty}\mathbb{M}^\phi(\mu^k).
\end{equation}
First of all, using the monotonicity, lower semi-continuity of $\phi$ and the monotone convergence theorem, it holds
\begin{equation}\label{eq:monotone_convergence_phi_mass}
\mathbb{M}^\phi(\mu)
=
\int_E \phi(\theta) \, \mathrm{d} \mathcal{H}^1
=
\lim_{N \to +\infty} \int_E \phi(\theta_N) \, \mathrm{d} \mathcal{H}^1
=
\lim_{N \to +\infty} \mathbb{M}^\phi(\mu_N).
\end{equation}
If $T_i^k=\llbracket E_k,\theta_i^k,\tau_i^k \rrbracket$ for $i,k \in \mathbb{N}$, for every $N,k \in \mathbb{N}$ we can define the rectifiable currents $W_N^k, W_N$ with coefficient in $\mathbb{R}^N$, see \cite[Definition 1.8]{AnMa}, given by
\begin{equation}
    W_N^k\coloneqq \llbracket  E_k, (\theta_1^k,\pm \theta_2^k\dots,\pm\theta_N^k),\tau_1^k \rrbracket,
    \qquad
    W_N\coloneqq \llbracket E, (\theta_1,\pm \theta_2\dots,\pm\theta_N),\tau_1 \rrbracket,
\end{equation}
where the signs $\pm$ are chosen in order to obtain $\pm \tau_1^k=\tau_i^k$ and $\pm \tau_1=\tau_i$. The group $\mathbb{R}^N$ is endowed with the maximum norm, namely
\begin{equation}
\|(b_1,\dots,b_N)\|_{\infty}
=
\max_{i=1,\dots,N} |b_i|
\qquad
\forall  (b_1,\dots,b_N)\in \mathbb{R}^N
\end{equation}
so that, for any rectifiable current $T=\llbracket E,(\theta_1,\dots,\theta_N),\tau \rrbracket$ with coefficients in $\mathbb{R}^N$, it is well defined the $\phi$-mass
\begin{equation}
\mathbb{M}^\phi(T)
\coloneqq
\int_M \phi\big(\|(\theta_1(x),\dots,\theta_N(x))\|_\infty\big) \, \mathrm{d} \mathcal{H}^1(x).
\end{equation}
Now let us fix $\varepsilon>0$ and $N \in \mathbb{N}$. By definition of $\mu_N$ and the semicontinuity of $\mathbb{M}^\phi$ for currents with coefficients in $\mathbb{R}^N$ proved in \cite[Proposition 9.4]{MMST21}, there exists $K \in \mathbb{N}$ such that
\begin{equation}
\mathbb{M}^\phi(\mu_N)
=
\mathbb{M}^\phi(W_N)
\leq
\mathbb{M}^\phi(W_N^k) +  \varepsilon
\leq
\mathbb{M}^\phi(\mu^k) + \varepsilon
\qquad
\forall k \geq K.
\end{equation}
The above inequality yields \eqref{eq:lsc_phimass}.


Let us now fix $i \in \mathbb{N}$.
In order to show that $\partial T_i \preceq \nu$ we notice that, since $ \partial T_i^k \preceq \nu$ for every $k \in \mathbb{N}$, by Radon-Nikodym theorem there exist $f_i^k \in L^\infty(|\nu|)$ such that
\begin{equation}\label{eq:f_boundaries}
\|f_i^k\|_{L^\infty} \leq 1,
\quad
\partial T_i^k
=
f_i^k |\nu|
\qquad
\forall k \in \mathbb{N}.
\end{equation}
The weak*-compactness in $L^\infty(|\nu|)$ implies that $f_i^k$ converge in this topology, up to a subsequence, to a function $f_i \in L^\infty(|\nu|)$.
The convergence $\partial T_i^k \to \partial T_i$ as currents yields $\partial T_i=f_i |\nu|$.
In order to complete the proof of $\partial T_i \preceq \nu$, we have to show that
\begin{equation}
    (f_i)^+|\nu| \leq \nu^+
    \quad \text{and} \quad
    (f_i)^-|\nu| \leq \nu^-.
\end{equation}
To do so, we observe that by Jordan decomposition there exist two disjoint sets $A^+,A^- \subseteq X$ such that
\begin{equation}\label{eq:decomposition_nu}
\nu^+ = |\nu|\llcorner A^+,
\qquad
\nu^- = |\nu| \llcorner A^-.
\end{equation}
Since $f_i^k \leq 0$ on $X \setminus A^+$, testing the weak-* convergence of $f_i^k$ to $f_i$ with $\mathbf{1}_{B}$ for any Borel set $B \subseteq X \setminus A^+$, it follows $f_i \leq 0$ on $X \setminus A^+$, hence $(f_i)^+|\nu| \leq \nu^+$. Similarly we have $(f_i)^-|\nu| \leq \nu^-$.


\item[$\bullet$ Continuity of the pay-off.]

To show the lower-semicontinuity of $\mathbb{E}_h^\phi$, by \eqref{eq:lsc_phimass} it suffices to show the continuity of the pay-off term, that is
\begin{equation}\label{eq:continuity_pay-off}
\sum_{i \in \mathbb{N}} a_i \int h(S_i,x) \, \mathrm{d} |\partial T_i|(x)
=
\lim_{k \to + \infty}
\sum_{i \in \mathbb{N}} a_i \int h(S_i,x) \, \mathrm{d} |\partial T_i^k|(x).
\end{equation}

Since $\partial T_i^k \preceq \nu$, by \eqref{eq:f_boundaries} and \eqref{eq:decomposition_nu} it follows that

\begin{equation}
|f_i^k| = f_i^k \big( \mathbf{1}_{A^+} - \mathbf{1}_{A^-} \big)
\qquad
\forall i,k \in \mathbb{N}.
\end{equation}
Hence, by weak-* convergence of $f_i^k$ to $f_i$, we have
\begin{equation}
\begin{split}
\lim_{k \to + \infty}
\int h(S_i,x) \, \mathrm{d} |\partial T_i^k|(x)
= &
\lim_{k \to + \infty}
\int h(S_i,x) |f_i^k| \, \mathrm{d} |\nu|
\\
= &
\lim_{k \to + \infty}
\int h(S_i,x) f_i^k \big( \mathbf{1}_{A^+} - \mathbf{1}_{A^-} \big)\, \mathrm{d} |\nu|
\\
= &
\int h(S_i,x) f_i \big( \mathbf{1}_{A^+} - \mathbf{1}_{A^-} \big)\, \mathrm{d} |\nu|
\\
=&
\int h(S_i,x) \, \mathrm{d} |\partial T_i|.
\end{split}
\end{equation}
Therefore \eqref{eq:continuity_pay-off} follows by the dominated convergence Theorem. \qedhere

\end{description}
\end{proof}

\section{Existence of minimizers for the Lagrangian formulation}

In this section, we prove Theorem \ref{thm:existence_lagr}. The proof follows the existence for the Eulerian formulation, but there are some differences due to the framework.

\begin{proof}[proof of Theorem \ref{thm:existence_lagr}]
    Let us consider a minimizing sequence $(P^k,\{P_i^k\}_i)^k$ of competitors. Since each $P_i^k$ satisfies $(\pi_\infty)_\# P_i^k - (\pi_0)_\# P_i^k \preceq \nu$, the pay-off term in \eqref{eq:energy_lagr} is uniformly bounded, hence
    \begin{equation}
        \sup_{k \in \mathbb{N}} \mathbb{M}^\phi(P^k) =C < +\infty.
    \end{equation}
    Any $P^k$ has finite $\phi$-mass, then one can apply \cite[Proposition 4.6]{BCMbook09} to obtain a \emph{loop-free} traffic plan $\Tilde{P}^k$ supported on injective curves and such that
    \begin{equation}
        \mathbb{M}^\phi(\Tilde{P}^k) \leq \mathbb{M}^\phi(P^k).
    \end{equation}
    More precisely, this process substitutes each curve $\gamma$ with another curve $\Tilde{\gamma}$ with the same endpoints, where the loops of $\gamma$ are cut-off. This induces a similar replacement of $P_i^k$ with another traffic plan $\tilde P_i^k$, which charges these new curves. This substitution possibly increases $f_iP_i^k$ since $\inf_{t \in \mathbb{R}^+}f_i(\gamma(t)) \leq \inf_{t \in \mathbb{R}^+}f(\tilde{\gamma}(t))$. Subsequently, it holds
    \begin{equation}
        \int h(i,x) \, \mathrm{d} \big((\pi_0)_\# f_iP_i^k + (\pi_\infty)_\# f_iP_i^k\big)
        \leq
        \int h(i,x) \, \mathrm{d} \big((\pi_0)_\# f_i\tilde P_i^k + (\pi_\infty)_\# f_i \tilde P_i^k\big)
    \end{equation}
    as well. Therefore $(\tilde P^k,\{\tilde P_i^k\}_i)^k$ is a new minimizing sequence for the problem and we can assume, without loss of generality, that coincides with $(P^k,\{P_i^k\}_i)^k$.

    Since for a loop-free traffic plan $P$ it holds
    \begin{equation}
        \mathbb{M}^\phi(P)
        =
        \int \phi(|x|_P) \, \mathrm{d} \mathcal{H}^1(x),
    \end{equation}
    we have
    \begin{equation}\label{eq:bound_phimass_P_i}
        \mathbb{M}^\phi(P_i^k) \leq \mathbb{M}^\phi(P^k)
        \leq C
        \qquad
        \forall i,k \in \mathbb{N}.
    \end{equation}
    After a reparametrization, we assume from now on that curves in $K(X)$ are parametrized by arc-length. Thus, \eqref{eq:bound_phimass_P_i}  and \cite[Lemma 3.39]{BCMbook09} imply that the average transport times $\int_{K(X)}T(\gamma) \, \mathrm{d} P_i^k$ are uniformly bounded and this gives compactness of each sequence $\{P_i^k\}_i$, by \cite[Theorem 3.28]{BCMbook09}.

    We can make that each $P^k$  charges only curves whose endpoints are contained in $\operatorname{supp} \nu^-$ and $\operatorname{supp} \nu^+$ by restricting it to the set
    \begin{equation}
        \left \{ \gamma \in K(X) \colon \gamma(0) \in \operatorname{supp} \nu^-,\,\gamma(T(\gamma)) \in \operatorname{supp} \nu^+ \right\}.
    \end{equation}
    Since $P^k$ is assumed to be loop-free, this restriction has a possibly lower $\phi$-mass and, since $(\pi_\infty)_\# P_i - (\pi_0)_\# P_i \preceq \nu$, it contains the restrictions of $P_i^k$. Thus, we again have a minimizing sequence for the problem.


In order to obtain an upper bound on the masses of $P^k$, we first observe that, if $0<s<t$, then there exists a positive integer $k$ such that $k s < t \leq (k+1)s$; the monotonicity and sub-additivity of $\phi$ then yield
\begin{equation}
    \frac{\phi(t)}{t} \leq \frac{\phi\big((k+1)s\big)}{ks} \leq \frac{(k+1)\phi(s)}{k s} \leq
    \frac{\phi(s)}{2s},
\end{equation}
hence $\frac{\phi(t)}{t} \leq \frac{\phi(s)}{2s}$. This estimate and the assumption $D \coloneqq \operatorname{dist}(\operatorname{supp} \nu^-,\operatorname{supp}\nu^+)>0$ provide
\begin{equation}\label{eq:uniform_bound_mass_P^k}
\begin{aligned}
       \mathbb{M}^\phi(P^k)
   &=
    \int_{K(X)} \int_0^{+\infty} \frac{\phi(|\gamma(t)|_P)}{|\gamma(t)|_P} \dot{\gamma}(t) \, \mathrm{d} t \, \mathrm{d} P(\gamma)
    \\
   &\geq
   \frac{\phi\big(\mathbb{M}(P^k)\big)}{2 \mathbb{M}(P^k)} \int_{K(X)} L(\gamma) \, \mathrm{d} P^k(\gamma)
   \\
   &\geq
   \frac{D}{2} \cdot \phi\big(\mathbb{M}(P^k)\big)
\end{aligned}
\end{equation}
Since $\lim_{t \to +\infty}\phi(t)=+\infty$, the uniform bound on $\mathbb{M}^\phi(P^k)$ implies that the masses $\mathbb{M}(P^k)$ are uniformly bounded.


Again \eqref{eq:uniform_bound_mass_P^k} then yield a uniform bound on
\begin{equation}
    \int L(\gamma) \, \mathrm{d} P(\gamma)
    =
    \int T(\gamma) \, \mathrm{d} P(\gamma),
\end{equation}
where $T(\gamma)$ is the stopping time of $\gamma$ and the equality holds since we assumed that $\gamma$ are parametrized by arc-lenght.
These considerations give pre-compactness of the sequence $\{P^k\}^k$ again by \cite[Theorem 3.28]{BCMbook09}.
Thus, up to extracting a diagonal subsequence (not relabeled), we can assume that
\begin{equation}
    P^k \overset{*}{\rightharpoonup} P,
    \qquad
    P_i^k \overset{*}{\rightharpoonup} P_i \quad
    \forall i \in \mathbb{N}
\end{equation}
and we are going to show that $(P,\{P_i\}_i)$ is a minimizer for our problem.

Since $P^k,P_i^k$ are positive Radon measures such that $P_i^k \leq P^k$, passing to the limit we have $P_i \leq P$ for every $i \in \mathbb{N}$. Moreover, since the average transport times are uniformly bounded, \cite[Proposition 3.27]{BCMbook09} implies
$$
(\pi_\infty)_\# P_i - (\pi_0)_\# P_i \preceq \nu
\qquad
\forall i \in \mathbb{N}.
$$
Hence $(P,\{P_i\}_i)$ is a competitor for the minimization problem. Since by \cite[Proposition 3.40]{BCMbook09} it holds
\begin{equation}
    \mathbb{M}^{\phi}(P) \leq \liminf_{k \to +\infty}
    \mathbb{M}^{\phi}(P^k),
\end{equation}
in order to show that $(P,\{P_i\}_i)$ is a minimizer, it remains to prove that
\begin{equation}\label{eq:semicont_pay-off}
    \sum_{i=1}^{+\infty} a_i \int h(i,x) \, \mathrm{d} \big((\pi_0)_\# f_iP_i + (\pi_\infty)_\# f_iP_i\big)
    \geq
    \limsup_{k \to +\infty}
    \sum_{i=1}^{+\infty} a_i \int h(i,x) \, \mathrm{d} \big((\pi_0)_\# f_iP_i^k + (\pi_\infty)_\# f_iP_i^k\big).
\end{equation}
To this aim, let us fix $i \in \mathbb{N}$. We first observe that the map $\pi_0 \colon K(X) \to X$ is continuous. If we call $\tilde{f}_i \colon K(X) \to [0,1]$ the function defined as
\begin{equation}
    \tilde{f}_i (\gamma) = \inf_{x \in \operatorname{Im}(\gamma)} f(x),
\end{equation}
we can write
\begin{equation}\label{eq:semicont_pi0}
    \begin{aligned}
         \int_X h(i,x) \, \mathrm{d} (\pi_0)_\# f_iP_i(x)
         &=
        \int_{K(X)} h\big(i,\pi_0(\gamma)\big) \tilde{f}_i(\gamma) \, \mathrm{d} P_i(\gamma)
        \\
        &\geq
        \limsup_{k \to +\infty}  \int_{K(X)} h\big(i,\pi_0(\gamma)\big) \tilde{f}_i(\gamma) \, \mathrm{d} P_i^k(\gamma)
        \\
        &=
        \limsup_{k\to +\infty}\int_X h(i,x) \, \mathrm{d} (\pi_0)_\# f_iP_i^k(x),
    \end{aligned}
\end{equation}
where the inequality follows by standard measure theory, see for instance \cite[Proposition 1.62]{AmbFusPal}, since $X$ is compact and $\tilde{f}_i$ is upper semi-continuous with respect to the topology induced on $ K(X) $ by the uniform convergence on compact sets, thus the integrand is upper semi-continuous.

Unfortunately, the same argument cannot be applied to $(\pi_\infty)_\# f_iP_i^k$ because $\pi_\infty$ is not continuous, so the map $\gamma \mapsto h\big(i,\pi_\infty(\gamma)\big)$ has in general no continuity properties.
On the other hand, exploiting the uniform boundedness of the average transport times, it is possible to obtain the desired upper semi-continuity of the integrals. Indeed, from the previous steps, there exist $C>0$ such that
\begin{equation}
    \int T(\gamma) \, \mathrm{d} P_i^k(\gamma) \leq C
    \qquad
    \forall k \in \mathbb{N}.
\end{equation}
Let us now fix $\varepsilon>0$, choose $M>0$ such that $\frac{C}{M}<\varepsilon$ and define
\begin{equation}
    E \coloneqq \left\{ \gamma \in K(X) \colon T(\gamma) \leq M\right\}.
\end{equation}
By Chebichev inequality it holds
\begin{equation}
    P_i^k \big( K(X) \setminus E \big)
    \leq \frac{C}{M}< \varepsilon
    \qquad
    \forall k \in \mathbb{N}.
\end{equation}
Using this estimate and the positivity of $h$, we obtain
\begin{equation}\label{eq:semicont_piinfty}
    \begin{aligned}
        \limsup_{k \to +\infty} \int_X h(i,x) \, \mathrm{d} (\pi_\infty)_{\#} f_iP_i^k(x)
        &=
        \limsup_{k \to +\infty} \int_X h(i,\pi_{\infty}(\gamma)) \tilde f_i(\gamma) \, \mathrm{d} P_i^k(\gamma)
        \\
        &\leq
        \varepsilon \cdot \sup h + \limsup_{k \to +\infty}  \int_{E} h(i,\pi_{\infty}(\gamma)) \tilde f_i(\gamma) \, \mathrm{d} P_i^k (\gamma)
        \\
        &\leq
        \varepsilon \cdot \sup h + 
        \limsup_{k \to +\infty} \int_{K(X)} h(i,\gamma(M)) \tilde f_i(\gamma) \, \mathrm{d} P_i^k(\gamma)
        \\
        &\leq
        \varepsilon \cdot \sup h +  \int_{K(X)} h(i,\gamma(M)) \tilde f_i(\gamma) \, \mathrm{d} P_i(\gamma)
        \\
        &\leq
        2\varepsilon \cdot \sup h + \int_{K(X)} h(i,T(\gamma)) \tilde f_i(\gamma) \, \mathrm{d} P_i(\gamma)
        \\
        &=
        2\varepsilon \cdot \sup h + \int_X h(i,x) \, \mathrm{d} (\pi_\infty)_{\#} f_iP_i^k(x),
    \end{aligned}
\end{equation}
where the third inequality follows by the continuity of the map $\gamma \mapsto \gamma(M)$, the upper semi-continuity of $\tilde f_i$ and \cite[Proposition 1.62]{AmbFusPal}, as above. Since $\varepsilon$ is arbitrary, this and \eqref{eq:semicont_pi0} yield
\begin{equation}
   \int h(i,x) \, \mathrm{d} \big((\pi_0)_\# f_iP_i + (\pi_\infty)_\# f_iP_i\big)
    \geq
    \limsup_{k \to +\infty}
\int h(i,x) \, \mathrm{d} \big((\pi_0)_\# f_iP_i^k + (\pi_\infty)_\# f_iP_i^k\big)
\qquad
\forall i \in \mathbb{N}.
\end{equation}
For every $i \in \mathbb{N}$, these terms are uniformly bounded, thus multiplying by $a_i$ and summing over $i \in \mathbb{N}$ proves \eqref{eq:semicont_pay-off}. This implies that $(P,\{P_i\}_i)$ is a minimizer for the problem.
    \end{proof}

\section{Comparison between the models}\label{sec:comparison}

In this section we collect some observations on the differences between the Eulerian and Lagrangian models and provide some examples which motivate the formulations of the problems.


\subsection{Orientation}\label{sec:orientation}

In the Eulerian formulation of the robust transport, the network is modeled by an un-oriented rectifiable measure, which allows transportation on both direction on each ``branch", in case of damages: the condition $\|T_i\| \leq \mu$ allows for changes of orientation when different damages occur.
In the Lagrangian formulation, instead, the network is modeled by a transport plan, which prescribes the orientation of the transport on each curve.

It is interesting to study what would happen if, in the Eulerian version, we fix the same orientation for every $T_i$. More precisely, we could study the minimization problem for an energy similar to \eqref{intro_en}, where the ambient network is modeled by a rectifiable current $T \in \mathcal{R}_1(X)$, instead of a measure $\mu$, and we prescribe that every $T_i$ has the same orientation of $T$, namely $T_i \leq T$ instead of the weaker $\|T_i\| \leq \|T\|$. Surprisingly, this problem may not have a solution, as shown by the following example.

\begin{example}\label{es:non_existence}
Let $X=[-3,3]^2 \subset \mathbb{R}^2$, $\phi(t)= \sqrt{t}$, let $h$ be a constant which we fix later and
\begin{equation}
\nu^+ =
\dfrac{\delta_{(-3,0)} + \delta_{(2,0)}}{2}
,
\qquad
\nu^-
=
\dfrac{\delta_{(3,0)} + \delta_{(-2,0)}}{2}.
\end{equation}
We consider only two damaged sets $S_1,S_2$, each with probability $a_1=a_2=\frac{1}{2}$, represented in Figure \ref{fig:example_non_existence}.
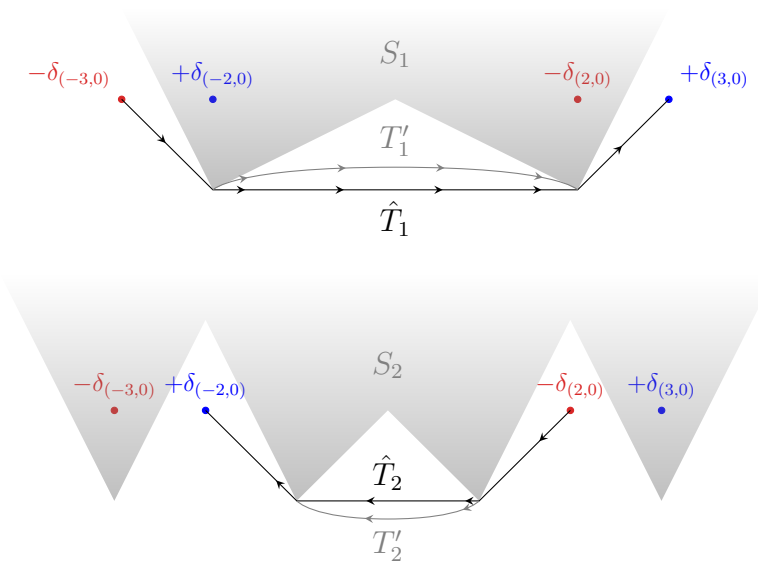
\begin{figure}[ht]
\begin{center}
\begin{tikzpicture}[>=latex,scale=1.2,
decoration={
    markings,
    mark=between positions 0.1 and 0.9 step 1.3cm with {\arrow{stealth}}}
]
\newcommand{\hz}{4}
\newcommand{\er}{0}
\newcommand{\dam}{red!50!white}
\newcommand{\sh}{7}
\draw[fill, color=red!70!gray] (-3,0) circle [radius=1pt] node[above left] {\scalebox{0.8}{$-\delta_{(-3,0)}$}};
\draw[fill, color=blue] (-2,0) circle [radius=1pt] node[above] {\scalebox{0.8}{$+\delta_{(-2,0)}$}};
\draw[fill, color=blue] (3,0) circle [radius=1pt] node[above right] {\scalebox{0.8}{$+\delta_{(3,0)}$}};
\draw[fill, color=red!70!gray] (2,0) circle [radius=1pt] node[above] {\scalebox{0.8}{$-\delta_{(2,0)}$}};

\fill[gray, opacity=0.5, path fading=north]
(-3,1) -- (-2,-1) -- (0,0) -- (2,-1) -- (3,1) -- cycle;
\node [color=gray,opacity=1] at (0,0.5) {$S_1$};
\draw[postaction={decorate}] (-3,0) -- (-2,-1) --node[below] {$\hat{T}_1$} (2,-1) -- (3,0);
\draw [gray, postaction={decorate}] plot [smooth, tension=1.6] coordinates { (-2,-1) (0,-0.75) (2,-1)};
 \node[color=gray, above] at (0,-0.75) {$T_1'$};
\end{tikzpicture}
\\
\bigskip
\begin{tikzpicture}[>=latex,scale=1.2,
decoration={
    markings,
    mark=between positions 0.1 and 0.9 step 1.3cm with {\arrow{stealth}}}
]
\newcommand{\hz}{4}
\newcommand{\er}{0}
\newcommand{\dam}{red!50!white}
\newcommand{\sh}{7}
\draw[fill, color=red!70!gray] (-3,0) circle [radius=1pt] node[above] {\scalebox{0.8}{$-\delta_{(-3,0)}$}};
\draw[fill, color=blue] (-2,0) circle [radius=1pt] node[above] {\scalebox{0.8}{$+\delta_{(-2,0)}$}};
\draw[fill, color=blue] (3,0) circle [radius=1pt] node[above] {\scalebox{0.8}{$+\delta_{(3,0)}$}};
\draw[fill, color=red!70!gray] (2,0) circle [radius=1pt] node[above] {\scalebox{0.8}{$- \delta_{(2,0)}$}};
\fill[gray, opacity=0.5, path fading=north] (-4.25,1.5) -- (-3,-1) -- (-2,1) -- (-1,-1) -- (0,0) -- (1,-1) -- (2,1) -- (3,-1) -- (4.25,1.5) -- cycle;
\node[color=gray,opacity=1] at (0,0.5) {$S_2$};
\draw[postaction={decorate}] (2,0) -- (1,-1) --node[above] {$\hat{T}_2$} (-1,-1) -- (-2,0);
\draw [gray, postaction={decorate}] plot [smooth, tension=1.5] coordinates { (1,-1) (0,-1.2) (-1,-1)};
 \node[color=gray, below] at (0,-1.2) {$T_2'$};
\end{tikzpicture}
\end{center}
\caption{The measures $\nu^-$ (in red) and $\nu^+$ (in blue) and the sets $S_1,S_2$ of the example \ref{es:non_existence}. The pattern $\hat{T}_1$, $\hat{T}_2$ are individually optimal for the pay-off term, but they cannot be both contained in any \emph{oriented} network $T$, since the two segments between $(-1,-1)$ and $(1,-1)$ have opposite orientations.}\label{fig:example_non_existence}
\end{figure}

 Let $\hat{T}_1$ be the shortest curve connecting $(-3,0)$ to $(3,0)$ which avoids $S_1$, and let $\hat{T}_2$ be the shortest curve connecting $(2,0)$ to $(-2,0)$ which avoids the damage $S_2$.
 Since the pay-off term in \eqref{intro_en} equals
 $$
 \frac{h}{2}\big(\mathbb{M}(\partial T_1) + \mathbb{M}(\partial T_2)\big),
 $$
 to minimize the energy for $h$ large enough, it would be convenient to choose $T_1=\hat{T}_2$ and $T_2=\hat{T}_2$. However, no current $T$ can have both $\hat{T}_1$ and $\hat{T}_2$ as subcurrents, since they contain the segment from $(-1,-1)$ to $(1,-1)$ with opposite orientations.
  We can of course chose $T_1'$ and $T_2'$ which do not overlap and are arbitrarily close to $T_1,T_2$, thus showing that the infimum of the energy is not attained.


\end{example}


 \subsection{Properties of damages}\label{sec:prop_damages}

 In the Eulerian formulation the possible damages completely turn-off open sets, namely they are described by characteristic functions of closed sets; the Lagrangian version of the problem allows for much more general damages, namely upper semi-continuous functions.
 
 A natural question is whether this larger class of damages can be treated by the Eulerian formulation.
 If we translate our Lagrangian model, a competitor for the problem would be $(\mu,\{T_i\}_{i \in \mathbb{N}})$, where $\mu=\theta \mathcal{H}^1 \llcorner E$ is a rectifiable measure and the recovery plans $T_i$ are currents such that $\|T_i\| \leq \mu$ and $\partial T_i \preceq \nu^+-\nu^-$ with energy
        \begin{equation}
            \mathbb{E}(\mu,\{T_i\}) \coloneqq \mathbb{M}^\phi(\mu)-\sum_{i\in\mathbb{N}}a_i\int h(i,x) \, \mathrm{d} |\partial \tilde{T}_i|(x)\,.
        \end{equation}
The currents $\tilde{T}_i$, which appear in the pay-off term, are penalizations of the currents $T_i=\llbracket E,\theta_i,\tau_i \rrbracket$ under the action of $f_i$, for instance, any current $\tilde{T}_i=\llbracket E,\tilde{\theta}_i,\tau_i \rrbracket$ satisfying
\begin{equation}
    \tilde{\theta}_i \leq f_i\theta_i,
     \qquad
    \partial \tilde{T}_i \preceq \partial T_i.
\end{equation}
The following example shows that this minimization problem may not admit a solution.

    \begin{example}\label{ex:non_continuous}
        Let $X=[0,1]^2 \subset \mathbb{R}^2$, $\nu^-=\delta_{(1,0)} + \delta_{(0,1)}$, $\nu^+=\delta_{(0,0)} + \delta_{(1,1)}$, and assume that there is only one damage $f$, that occurs with probability $1$, such that 
        \begin{gather}
            f(x,3x)=f(x,3-3x)=1,
            \\
            f(x,1) = \frac{1}{2}
            \quad
            \forall x \in \left(0,\frac{1}{6} \right) \cup \left(\frac{5}{6},1 \right),
            \\
            f\left(x,y \right)=y^\beta
            \quad
            \forall x\in \left( \frac{3}{8},\frac{5}{8}\right) \,\,\forall y \in [0,1],
        \end{gather}
for some $\beta>0$, as illustrated in Figure \ref{fig:non_continuous}.
\begin{figure}[ht]
\begin{center}
\includegraphics[scale=0.2]{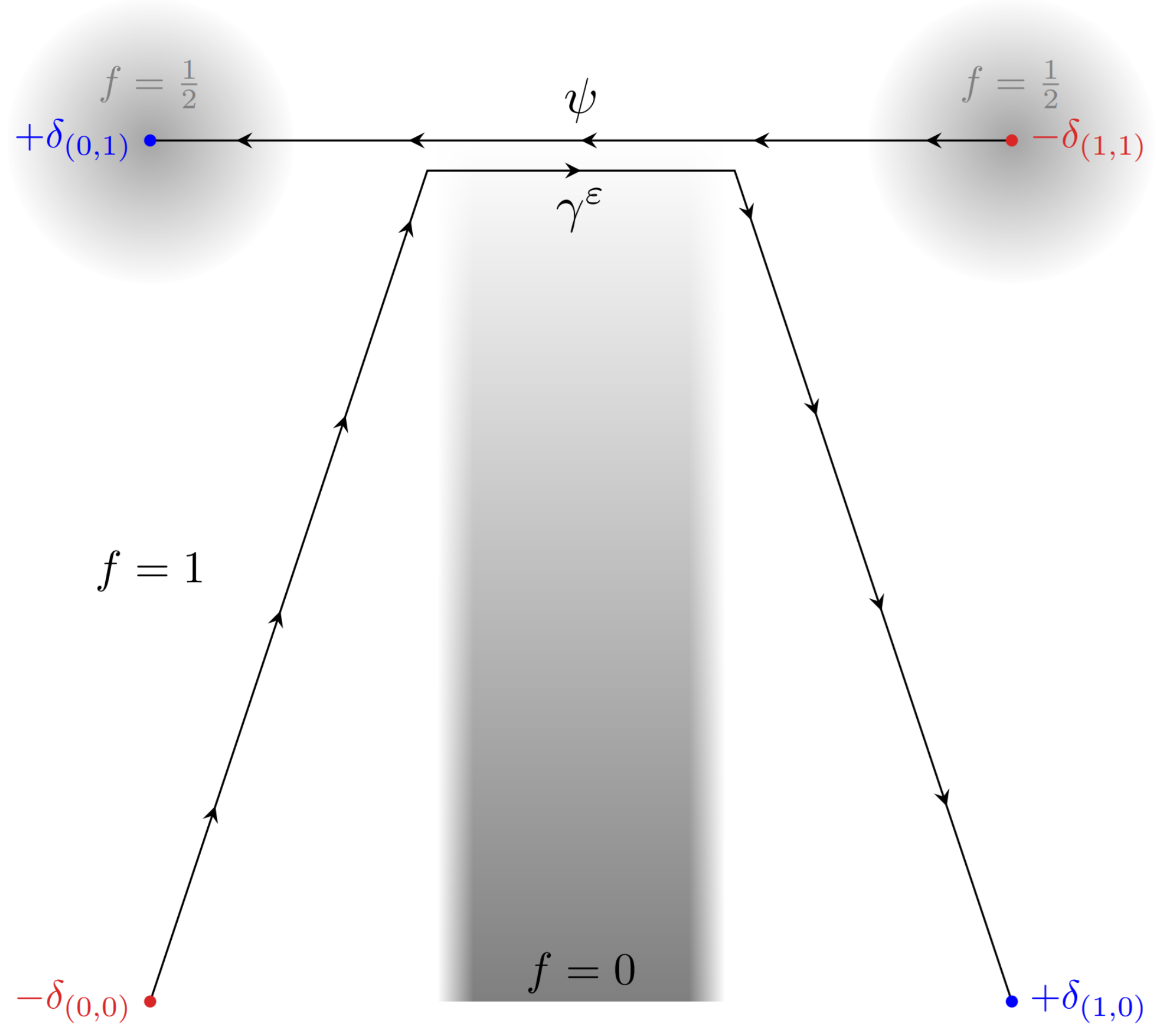}
\end{center}
\caption{The situation described in Example \ref{ex:non_continuous}: the shape of the damage $f$ forces the optimal recovery plan to contain two overlapping curves with different orientations.}\label{fig:non_continuous}
\end{figure}
        If $\phi$, $\beta$ and $h$ are suitably tuned, then an ``almost-optimal" network $\mu_\varepsilon$ is given by the two curves $\gamma^\varepsilon,\psi$ defined as
        \begin{equation}
            \gamma^\varepsilon(x)
            =
            \begin{cases}
                (x,3x)                      & \text{if } 0 \leq x \leq\frac{1}{3}-\varepsilon
                \\[10pt]
                (x,1-3\varepsilon)          & \text{if } \frac{1}{3}-\varepsilon \leq x \leq \frac{2}{3}+\varepsilon
                \\[10pt]
                (x,3-3x)                    & \text{if } \frac{2}{3}+\varepsilon \leq x \leq 1,
            \end{cases}
            \qquad\quad
            \psi(x)=(1-x,1) \quad \forall x \in [0,1],
        \end{equation}
        and the recovery plans $T_1^\varepsilon, \tilde{T}_1^\varepsilon$ are given by
        \begin{equation}
            T_1^\varepsilon
            =
            \llbracket \gamma^\varepsilon\rrbracket + \llbracket \psi \rrbracket,
            \qquad
            \tilde{T}_1^\varepsilon
            =
            (1-3\varepsilon)^\beta \llbracket \gamma^\varepsilon\rrbracket + \frac{1}{2}\llbracket \psi \rrbracket.
        \end{equation}
        Thus, $\tilde{T}_1^\varepsilon$ transports a mass $(1-3\varepsilon)^\beta$ through $\gamma^\varepsilon$ and a mass $\frac{1}{2}$ through $\psi$, with a total amount of approximately $\frac{3}{2}$.
        
        Allowing $\varepsilon \to 0$, the horizontal portion of $\gamma^\varepsilon$ collapses on $\psi$ with opposite orientation, thus $T_1^\varepsilon$ converges to a current $T_1$ which transports $\delta_{(0,1)}$ into $\delta_{(0,0)}$ and $\delta_{(1,0)}$ into $\delta_{(1,1)}$. But this implies that any suitable $\tilde{T}_1$ would transport $\frac{1}{2}\delta_{(0,1)}$ to $\frac{1}{2}\delta_{(0,0)}$ and $\frac{1}{2}\delta_{(1,0)}$ to $\frac{1}{2}\delta_{(1,1)}$, with a total mass of $1$. If $h$ is sufficiently large with respect to $\phi$, then the minization problem does not have a solution.
    \end{example}
In the above example, the problem is the following: as $\varepsilon\to 0$, the curves $\gamma^\varepsilon$ and $\psi$ overlap and the limit transport $T_1$ ``does not respect" the intention of transporting $\delta_{(1,0)}$ to $\delta_{(0,0)}$ and $\frac{1}{2}\delta_{(0,1)}$ to $\frac{1}{2}\delta_{(1,1)}$. Instead, $T_1$ mixes the curves, excluding the overlapping regions.

The necessity of keeping track of the curves is evident, and this suggests that a Lagrangian formulation is the right tool to use.
Indeed, in the Eulerian formulation, a recovery plan $T_i$ does not provide any information on where each part of $\nu^-$ is transported, while in the Lagrangian formulation, each recovery plan tracks the route followed by each portion of $\nu$.

On the other hand, in the Eulerian formulation a recovery plan is a current $T_i$ that can use any part of the network, while in the Lagrangian version of the problem the recovery plans $P_i$ are sub-traffic plans of $P$, thus we allow one to use only curves charged by $P$, not parts of them.

\subsection{Distance between supports of $\nu^-$ and of $\nu^+$ and properties of $\phi$}\label{sec:distance}

 The Eulerian version of the problem does not require any further hypotheses on $\nu$ and $\phi$. In the Lagrangian formulation, instead, we asked
 \begin{equation}
     \operatorname{dist}(\operatorname{supp} \nu^-,\operatorname{supp}\nu^+)>0,
     \qquad
     \lim_{t \to +\infty} \phi(t)=+\infty.
 \end{equation}
 These assumptions are necessary in order to obtain an upper bound on the masses of the traffic plans $P^k$ in the minimizing sequence, as shown by the following examples.

\begin{example}\label{ex:distance}
    Let
    \begin{gather}
        X=[0,1]^2 \subset \mathbb{R}^2,
        \qquad
        \phi(t)=\sqrt{t},
        \qquad
        h \equiv 1, \qquad
        a_i=2^{1-2j} \quad \text{for } j \in \mathbb{N} \text{ and } i=2^{j-1},\dots,2^j-1,
        \\
        x_j=(2^{-3j},0),\qquad y_j=(2^{-3j},2^{-3j}),
        \qquad
        \nu^-=\sum_{j \in \mathbb{N}}2^{-j} \delta_{x_j},
        \qquad
        \nu^+=\sum_{j \in \mathbb{N}}2^{-j} \delta_{y_j}.
    \end{gather}
    Moreover, for every $j \in \mathbb{N}$ and every $i=2^{j-1},\dots,2^j-1$, let $f_i$ assume value $1$ on a curve $\gamma_i$ joining $x_j$ and $y_j$ and $0$ elsewhere and we assume that the curves $\gamma_i$ are mutually disjoint, except for the points $x_j,y_j$. These elements are represented in Figure \ref{fig:distance}.

    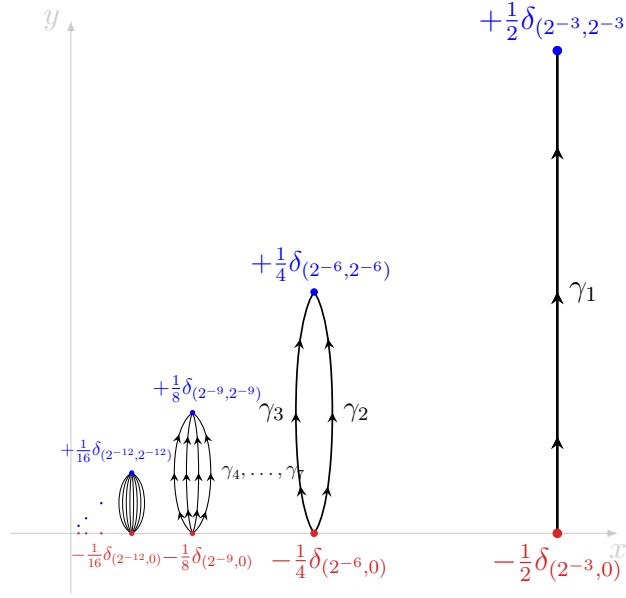
\begin{figure}
        \centering
        \begin{tikzpicture}[>=latex,scale=0.8,
decoration={
    markings,
    mark=between positions 0.2 and 0.9 step 0.3 with {\arrow{stealth}}}
]

\draw[->, gray, opacity=0.4] (-1,0) -- (9,0)node[below]{$x$};
\draw[->,gray, opacity=0.4] (0,-1) -- (0,8.5)node[left]{$y$};

\draw[line width=1pt]
(8,0) --node[right] {$\gamma_1$} (8,8)
[postaction={decorate, decoration={markings,
            mark=between positions 0.2 and 1 step 0.3 with{\arrow{stealth}}
        }}]
;

\draw[postaction={decorate}, line width=0.65pt] plot [smooth, tension=1.5] coordinates { (4,0) (4.3,2) (4,4)};
\node[right] at (4.3,2) {\scalebox{0.9}{$\gamma_2$}};
\draw[postaction={decorate}, line width=0.65pt] plot [smooth, tension=1.5] coordinates { (4,0) (3.7,2) (4,4)};
\node[left] at (3.7,2) {\scalebox{0.9}{$\gamma_3$}};

\draw[postaction={decorate}, line width=0.4pt] plot [smooth, tension=1.5] coordinates { (2,0) (2.30,1) (2,2)};
\draw[postaction={decorate}, line width=0.4pt] plot [smooth, tension=1.5] coordinates { (2,0) (2.1,1) (2,2)};
\draw[postaction={decorate}, line width=0.4pt] plot [smooth, tension=1.5] coordinates { (2,0) (1.9,1) (2,2)};
\draw[postaction={decorate}, line width=0.4pt] plot [smooth, tension=1.5] coordinates { (2,0) (1.7,1) (2,2)};
\node[right] at (2.30,1) {\scalebox{0.65}{$\gamma_4,\dots,\gamma_7$}};

\draw[line width=0.2pt] plot [smooth, tension=1.5] coordinates { (1,0) (1.21,0.5) (1,1)};
\draw[line width=0.2pt] plot [smooth, tension=1.5] coordinates { (1,0) (1.15,0.5) (1,1)};
\draw[line width=0.2pt] plot [smooth, tension=1.5] coordinates { (1,0) (1.09,0.5) (1,1)};
\draw[line width=0.2pt] plot [smooth, tension=1.5] coordinates { (1,0) (1.03,0.5) (1,1)};
\draw[line width=0.2pt] plot [smooth, tension=1.5] coordinates { (1,0) (0.97,0.5) (1,1)};
\draw[line width=0.2pt] plot [smooth, tension=1.5] coordinates { (1,0) (0.91,0.5) (1,1)};
\draw[line width=0.2pt] plot [smooth, tension=1.5] coordinates { (1,0) (0.85,0.5) (1,1)};
\draw[line width=0.2pt] plot [smooth, tension=1.5] coordinates { (1,0) (0.79,0.5) (1,1)};

\draw[fill, color=blue] (8,8) circle [radius=2pt] node[above] {\scalebox{1}{$+\frac{1}{2}\delta_{\left (2^{-3},2^{-3}\right )}$}};
\draw[fill, color=red!70!gray] (8,0) circle [radius=2pt] node[below] {\scalebox{1}{$-\frac{1}{2}\delta_{\left (2^{-3},0\right )}$}};
\draw[fill, color=blue] (4,4) circle [radius=1.5pt] node[above, xshift=1mm] {\scalebox{0.9}{$+\frac{1}{4}\delta_{\left (2^{-6},2^{-6}\right )}$}};
\draw[fill, color=red!70!gray] (4,0) circle [radius=1.5pt] node[below, xshift=2mm] {\scalebox{0.9}{$-\frac{1}{4}\delta_{\left (2^{-6},0\right )}$}};
\draw[fill, color=blue] (2,2) circle [radius=1pt] node[above,xshift=2mm] {\scalebox{0.7}{$+\frac{1}{8}\delta_{\left (2^{-9},2^{-9}\right )}$}};
\draw[fill, color=red!70!gray] (2,0) circle [radius=1pt] node[below,xshift=2mm] {\scalebox{0.7}{$-\frac{1}{8}\delta_{\left (2^{-9},0\right )}$}};
\draw[fill, color=blue] (1,1) circle [radius=1pt] node[above,xshift=-2mm] {\scalebox{0.6}{$+\frac{1}{16}\delta_{\left (2^{-12},2^{-12}\right )}$}};
\draw[fill, color=red!70!gray] (1,0) circle [radius=1pt] node[below,xshift=-2mm] {\scalebox{0.6}{$-\frac{1}{16}\delta_{\left (2^{-12},0\right )}$}};

\draw[fill, color=blue] (0.5,0.5) circle [radius=0.3pt];
\draw[fill, color=blue] (0.25,0.25) circle [radius=0.3pt];
\draw[fill, color=blue] (0.125,0.125) circle [radius=0.3pt];
\draw[fill, color=red!70!gray] (0.5,0) circle [radius=0.3pt];
\draw[fill, color=red!70!gray] (0.25,0) circle [radius=0.3pt];
\draw[fill, color=red!70!gray] (0.125,0) circle [radius=0.3pt];
\end{tikzpicture}
        \caption{A representation of Example \ref{ex:distance}: each damage $f_i$ forces to choose a recovery plan supported on $\gamma_i$, while the pay-off forces any minimizing traffic plan $P$ to use all of them, hence obtaining infinite mass.}
        \label{fig:distance}
    \end{figure}

    Since each damage $f_i$ allows the transportation only through the curve $\gamma_i$, the energy of any competitor $(P,\{P_i\}_i)$ for the minimization problem can be reduced if we restrict the traffic plan $P$ to these curves and if we maximize the mass transported by each $P_i$; hence we can assume
    \begin{equation}
        P= \sum_{i \in \mathbb{N}} m_i \delta_{\gamma_i},
        \qquad
        P_i= \min\{m_i,2^{-j}\}\delta_{\gamma_i}
        \quad \forall i \in \{2^{j-1},\dots,2^j-1\},
    \end{equation}
    for a suitable sequence $\{m_i\}_{i \in \mathbb{N}}$. Thus the energy of the competitor is
    \begin{equation}
        \sum_{j \in \mathbb{N}} \sum_{i=2^{j-1}}^{2^j-1}
        \left( 2^{-3j}\sqrt{m_i} - 2^{1-2j} \min\{m_i,2^{-j}\}\right).
    \end{equation}
    Since the minimum of the function $m_i \mapsto 2^{-3j}\sqrt{m_i} - 2^{1-2j} \min\{m_i,2^{-j}\}$ is attained for $m_i=2^{-j}$, a minimizer for the problem must satisfy this condition for every $j \in \mathbb{N}$ and every $i\in \{2^{j-1},\dots,2^j-1\}$, hence having energy
    \begin{equation}
        \sum_{j \in \mathbb{N}} \sum_{i=2^{j-1}}^{2^j-1} \left( 2^{-3j}2^{-\frac{j}{2}} - 2^{1-2j} 2^{-j}\right)
        =
        \sum_{j \in \mathbb{N}} 2^{-2j} \big(2^{-\frac{j}{2}} - 1 \big),
    \end{equation}
    which converges to a negative number. On the other hand, this choice would produce a traffic plan $P$ with infinite mass, because
    \begin{equation}
        P(K(X))=
        \sum_{j \in \mathbb{N}} \sum_{i=2^{j-1}}^{2^j-1} 2^{-j}
        =
        \sum_{j \in \mathbb{N}} 2^{j-1} 2^{-j}
        =
        +\infty.
    \end{equation}
\end{example}

\begin{example}\label{ex:limit}
Let $\nu^-=\delta_{(0,0)}$, $\nu^+=\delta_{(1,0)}$ and let each damage $f_i$ assume value $1$ on the curve $\gamma_i$ joining $(0,0)$ with $(1,0)$ represented in Figure \ref{fig:limit} and $0$ elsewhere. Assume $\phi(t)=1$ for all $t>0$ so that the $\phi$-mass of a loop free traffic plan is the total length of its support.

Given any $h, a_i$, if the circle described by $\gamma_i$ is sufficiently small, then it is convenient for the energy to set $P_i$ as the traffic plan (of mass $1$) concentrated on $\gamma_i$. However, charging all these curves would produce a minimizing sequence of networks $P^k$ with unbounded masses.

        \begin{figure}[t]
        \centering
        \begin{tikzpicture}[>=latex,scale=0.8,
]

\draw[line width=0.5pt]
(0,0) --node[above, pos=0.75] {$\gamma_1$} (8,0)
[postaction={decorate, decoration={markings,
            mark=between positions 0.1 and 1 step 0.1 with{\arrow{stealth}}
        }}]
;

\draw[line width=0.5pt]
({4-0.5},0) arc (180:0:0.5)
[postaction={decorate, decoration={markings,
            mark=at position 0.5 with{\arrow{stealth}}
        }}]
;

\node[above] at (4,0.5) {$\gamma_2$};

\draw[line width=0.5pt]
({2-0.3},0) arc (180:0:0.25)
[postaction={decorate, decoration={markings,
            mark=at position 0.5 with{\arrow{stealth}}
        }}]
;

\node[above] at (2,0.25) {$\gamma_3$};

\draw[line width=0.5pt]
({1-0.125},0) arc (180:0:0.125)
;

\node[above] at (1,0.125) {$\gamma_4$};

\draw[fill, color=blue] (8,0) circle [radius=2pt] node[right] {\scalebox{1}{$+\delta_{\left (1,0\right )}$}};
\draw[fill, color=red!70!gray] (0,0) circle [radius=2pt] node[left] {\scalebox{1}{$-\delta_{\left (0,0\right )}$}};
\end{tikzpicture}
        \caption{Example \ref{ex:limit}: if $\phi$ is bounded and the curves $\gamma_i$ transporting $\nu^-$ to $\nu^+$ are ``almost overlapping", then for a minimizer it would be convenient to charge all of them, obtaining a traffic plan with infinite mass.}
        \label{fig:limit}
    \end{figure}
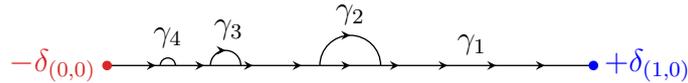
\end{example}

\printbibliography

\end{document}